\newtheorem{dfntn}{Definition}[section] 
\newtheorem{rmrk}{Remark}[section]    
\newtheorem{thrm}{Theorem}[section]     
\newtheorem{lmm}{Lemma}[section]	
\newcommand{\R}{\mathbb{R}}
\newcommand{\N}{\mathbb{N}}
\newcommand{\eps}{\varepsilon}
\newcommand{\vertiii}[1]{{\left\vert\kern-0.25ex\left\vert\kern-0.25ex\left\vert #1
    \right\vert\kern-0.25ex\right\vert\kern-0.25ex\right\vert}}
\def\calO{\mathcal{O}}
\def\calT{\mathcal{T}}
\def\calS{\mathcal{S}}
\numberwithin{equation}{section}
\pgfplotsset{compat=1.15}
\begin{document}

\title[Mixed FE approximation of periodic HJB problems and numerical homogenization]{Mixed finite element approximation of periodic Hamilton--Jacobi--Bellman problems with application to numerical homogenization}

\author[D. Gallistl]{Dietmar Gallistl}
\address[Dietmar Gallistl]{Friedrich-Schiller-Universit\"{a}t Jena, Institut f\"ur Mathematik, Ernst-Abbe-Platz 2, 07743 Jena, Germany.}
\email{dietmar.gallistl@uni-jena.de}

\author[T. Sprekeler]{Timo Sprekeler}
\address[Timo Sprekeler]{University of Oxford, Mathematical Institute, Woodstock Road, Oxford OX2 6GG, UK.}
\email{sprekeler@maths.ox.ac.uk}

\author[E. S\"{u}li]{Endre S\"{u}li}
\address[Endre S\"{u}li]{University of Oxford, Mathematical Institute, Woodstock Road, Oxford OX2 6GG, UK.}
\email{suli@maths.ox.ac.uk}

\subjclass[2010]{35B27, 35J60, 65N12, 65N15, 65N30}
\keywords{Hamilton--Jacobi--Bellman equation, nondivergence-form elliptic PDE, Cordes condition, mixed finite element methods, homogenization}
\date{\today}

\begin{abstract}
In the first part of the paper, we propose and rigorously analyze a mixed finite element method for the approximation of the periodic strong solution to the fully nonlinear second-order Hamilton--Jacobi--Bellman equation with coefficients satisfying the Cordes condition. These problems arise as the corrector problems in the homogenization of Hamilton--Jacobi--Bellman equations. The second part of the paper focuses on the numerical homogenization of such equations, more precisely on the numerical approximation of the effective Hamiltonian. Numerical experiments demonstrate the approximation scheme for the effective Hamiltonian and the numerical solution of the homogenized problem.
\end{abstract}

\maketitle

\section{Introduction}

In the first part of this work we consider the numerical approximation of the periodic boundary-value problem for the elliptic Hamilton--Jacobi--Bellman (HJB) equation
\begin{align}\label{intro HJB equ}
\begin{aligned}\sup_{\alpha\in\Lambda}\left\{  L^{\alpha}u - f^{\alpha}\right\} = 0 \quad \text{in }Y,\qquad
u\text{ is $Y$-periodic},\end{aligned}
\end{align}
where $\Lambda$ denotes a compact metric space and $Y:=(0,1)^n\subset\R^n$ is the unit cell in dimension $n\geq 2$. Here, $\{L^{\alpha}\}_{\alpha\in\Lambda}$ denotes the parametrized family of the linear uniformly elliptic (see \eqref{unifm ell}) differential operators
\begin{align*}
L^{\alpha}w:=-A^{\alpha}:D^2 w - b^{\alpha}\cdot \nabla w + c^{\alpha} w  :=-\sum_{i,j=1}^n a_{ij}(\,\cdot\,,\alpha)\,\partial_{ij}^2 w - \sum_{i=1}^n b_{i}(\,\cdot\,,\alpha)\,\partial_{i} w+ c(\,\cdot\,,\alpha)\, w
\end{align*}
and $f^{\alpha}:=f(\,\cdot\,,\alpha)$ with uniformly continuous functions $a_{ij}=a_{ji},b_i,c,f\in C(\R^n\times\Lambda)$ and positive zeroth-order coefficient $c>0$.

It is assumed that $A^{\alpha},b^{\alpha},c^{\alpha},f^{\alpha}$ are $Y$-periodic on $\R^n$ and that the coefficients satisfy the Cordes condition, i.e., that there exist constants $\lambda>0$ and $\delta\in (0,1)$ such that
\begin{align}\label{intro cordes}
\lvert A^{\alpha}\rvert^2+\frac{\lvert b^{\alpha}\rvert^2}{2\lambda} + \frac{(c^{\alpha})^2}{\lambda^2}\leq \frac{1}{n+\delta}\left(\mathrm{tr}(A^{\alpha}) + \frac{c^{\alpha}}{\lambda}  \right)^{2}
\end{align}
holds in $\R^n$ for all $\alpha\in\Lambda$. Under these assumptions, the periodic HJB problem \eqref{intro HJB equ} admits a unique strong solution $u\in H^2_{\mathrm{per}}(Y)$; see Section \ref{Section 2 framework}.

Problems of the form \eqref{intro HJB equ} arise naturally in the homogenization of HJB equations, which is the focus of the second part of this work. We are concerned with elliptic homogenization problems of the form
\begin{align}\label{intro u_eps equ}
\left\{ \begin{aligned} u_{\eps}+F\left[x,\frac{x}{\eps},\nabla u_{\eps},D^2 u_{\eps}\right] &= 0 &\quad &\text{in }\Omega,\\ \hfill u_{\eps}&= 0 &\quad  &\text{on }\partial\Omega,\end{aligned}\right.
\end{align}
with $\Omega\subset\R^n$ being a convex domain in dimension $n\geq 2$, a small parameter $\eps>0$, and
\begin{align*}
F\left[x,y,\nabla w,D^2 w\right]:=\sup_{\alpha\in\Lambda}\left\{-A^{\alpha}(x,y):D^2 w - b^{\alpha}(x,y)\cdot \nabla w  -f^{\alpha}(x,y)\right\}
\end{align*}
with uniformly continuous coefficients $a_{ij}=a_{ji},b_i,f\in C(\bar{\Omega}\times\R^n\times\Lambda)$. It is assumed that $A^{\alpha},b^{\alpha},f^{\alpha}$ are $Y$-periodic in $y\in\R^n$ with respect to their second arguments, and that the coefficients satisfy the Cordes condition \eqref{intro cordes} and the Lipschitz condition on $\bar{\Omega}\times \R^n$ uniformly in $\alpha\in\Lambda$.

It is well-known (see e.g., Caffarelli, Souganidis, Wang \cite{CSW05}, Evans \cite{Eva89,Eva92}) that the viscosity solution $u_{\eps}\in C(\bar{\Omega})$ to \eqref{intro u_eps equ} converges uniformly, as $\eps\searrow 0$, to the viscosity solution $u_0\in C(\bar{\Omega})$ of the homogenized problem
\begin{align}\label{intro hom}
\left\{ \begin{aligned} u_0 + H(x,\nabla u_0, D^2 u_0) &= 0 &\quad &\text{in }\Omega,\\ \hfill u_{0}&= 0 &\quad  &\text{on }\partial\Omega,\end{aligned}\right.
\end{align}
for some function $H:\bar{\Omega}\times \R^n\times \calS^{n\times n}\rightarrow \R$, the so-called effective Hamiltonian (here $\calS^{n\times n}:=\R^{n\times n}_{\mathrm{sym}}$). The value of the effective Hamiltonian at a fixed point $(s,p,R)\in \bar{\Omega}\times \R^n\times \calS^{n\times n}$ can be obtained as the uniform limit of the sequence $\{-\sigma v^{\sigma}\}_{\sigma>0}$ as $\sigma\searrow 0$, where the so-called approximate corrector $v^{\sigma}=v^{\sigma}(\cdot\,;s,p,R)$ is the solution to the problem
\begin{align}\label{intro appr cor}
\sigma v^{\sigma}+F\left[s,y,p,R+D_y^2 v^{\sigma}\right] = 0\quad \text{for }y\in Y,\qquad y\mapsto v^{\sigma}(y;s,p,R) \text{ is $Y$-periodic};
\end{align}
see e.g., Alvarez, Bardi \cite{AB01,AB10}, Camilli, Marchi \cite{CM09}. Observe that the problem for the approximate corrector \eqref{intro appr cor} fits into the framework of \eqref{intro HJB equ}. For further homogenization results we refer to Section \ref{Section NumHom}.

The main goal of this work is the efficient numerical approximation of the effective Hamiltonian. In order to do so, we first propose and analyze the numerical approximation of periodic boundary-value problems of the type \eqref{intro HJB equ} by a mixed finite element scheme (Section \ref{Section FE schemes for periodic HJB}), and then proceed with the numerical study of the approximate correctors and the effective Hamiltonian (Section \ref{Section NumHom}).

The motivation for studying the fully nonlinear second-order Hamilton--Jacobi--Bellman equation comes from stochastic control theory for Markov diffusion processes and we refer the reader to Fleming, Soner \cite{FS06}. Its study is a mathematically challenging task as there is no natural variational formulation and solvability has to be considered either in the sense of viscosity solutions (see Definition \ref{dfntn visc} and the user's guide by Crandall, Ishii, Lions \cite{CIL92} for a comprehensive overview), or in the sense of strong solutions, i.e., functions admitting weak derivatives up to order two satisfying the equation pointwise almost everywhere.

The finite element approximation of periodic HJB problems has not been studied a lot so far; the finite element approximation of the Dirichlet problem, however, has been the focus of active research over the past decade; see Feng, Glowinski, Neilan \cite{FGN13} and Neilan, Salgado, Zhang \cite{NSZ17} for a survey of recent developments. The mixed finite element method presented in this paper is a modified version of the mixed scheme for the Dirichlet problem with Cordes coefficients introduced in the previous work by Gallistl, S\"{u}li \cite{GS19}, which enables the use of $H^1$-conforming finite elements. For further $H^1$-conforming finite element schemes, we refer to Camilli, Falcone \cite{CF95}, Camilli, Jakobsen \cite{CJ09}, Jensen \cite{Jen17}, and Jensen, Smears \cite{JS13}. The first numerical scheme for HJB equations in the Cordes framework has been the discontinuous Galerkin finite element method in Smears, S\"{u}li \cite{SS14,SS16}.

The numerical homogenization of second-order HJB problems, and nondivergence-form problems in general, has not been studied extensively so far. For the case of linear nondivergence-form equations, we refer the reader to the previous work Capdeboscq, Sprekeler, S\"{u}li \cite{CSS20} (see also Sprekeler, Tran \cite{ST20}), and to the references therein. For the case of second-order HJB equations, a finite difference scheme for the whole-space problem has been proposed in Camilli, Marchi \cite{CM09}. In Finlay, Oberman \cite{FO18,FOO18}, the effective Hamiltonian is computed exactly for HJB operators of certain types and numerical simulations have been conducted. It seems that finite element schemes for the numerical homogenization of the problem \eqref{intro u_eps equ} have not yet been constructed. 

Let us note that there is a lot more work in the literature on the numerical approximation of the effective Hamiltonian arising in the homogenization of first-order Hamilton--Jacobi equations; see various authors \cite{ACC08,FR08,GLQ18,GO04,LYZ11,OTV09,Qia03,QTY18}. The paper is structured as follows.

In Section \ref{Section FE schemes for periodic HJB}, we propose and rigorously analyze a mixed finite element method for the approximation of the periodic solution to the HJB equation \eqref{intro HJB equ}. We prove \textit{a priori} (see Theorem \ref{Thm error bd}) as well as \textit{a posteriori} (see Theorem \ref{thrm a posteriori}, Remark \ref{rmrk local efficiency}) error bounds with explicit error constants.

In Section \ref{Section NumHom}, we discuss the numerical homogenization of problems of the form \eqref{intro u_eps equ}. We provide the framework and theoretical homogenization results in Sections \ref{subsec fra} and \ref{subsec homogenization} respectively. We then analyze the approximation of the approximate corrector \eqref{intro appr cor} by the mixed finite element scheme from Section \ref{Section FE schemes for periodic HJB}, and present a scheme for the approximation of the effective Hamiltonian in Sections \ref{subsec mixed fem for appr corr} and \ref{subsec app of eff ham} respectively.

In Section \ref{Sec Num Exp}, we present numerical experiments for the approximate corrector problems and the homogenized effective equation.

In Section \ref{Sec Coll of Pfs}, we collect the proofs of the results contained in this work. Let us note that some proofs follow certain arguments of the earlier work \cite{GS19}. Here, it is important to track the dependence of error constants in the Cordes parameters (see Remark \ref{rk explicit Ce}), which is crucial for the arguments in Sections \ref{subsec mixed fem for appr corr} and \ref{subsec app of eff ham}. We have therefore included all details of the proofs.

For simplicity, all results in this work are presented for dimensions $n\in \{2,3\}$ in which we define the rotation (curl) of a sufficiently regular vector field $w=(w_i)_{1\leq i\leq n}:\R^n\rightarrow\R^n$ by
\begin{align*}
\mathrm{rot}(w):=\partial_2 w_1 - \partial_1 w_2\quad\text{if }n=2,\quad\;\;
\mathrm{rot}(w):=(\partial_2 w_3 - \partial_3 w_2, \partial_3 w_1 - \partial_1 w_3,\partial_1 w_2 - \partial_2 w_1)\quad\text{if }n=3.
\end{align*}
The results in this paper remain valid for higher dimensions $n\geq 4$, in which case the rotation operator needs to be replaced by the exterior derivative operator.

\section{Mixed FEM for Periodic HJB Problems}\label{Section FE schemes for periodic HJB}

\subsection{Framework}\label{Section 2 framework}
In dimension $n\in \{2,3\}$, we let $Y:=(0,1)^n$ denote the unit cell in $\R^n$. Further, we let $\Lambda$ be a compact metric space. We then consider the problem of finding periodic strong solutions to the second-order Hamilton--Jacobi--Bellman equation
\begin{align}\label{HJB equ}
\sup_{\alpha\in\Lambda} \left\{ -A^{\alpha}:D^2 u - b^{\alpha}\cdot \nabla u + c^{\alpha} u -f^{\alpha}\right\} = 0 \quad &\text{in }Y,\qquad u \in H^2_{\mathrm{per}}(Y),
\end{align}
where we make the following assumptions on the coefficients: Writing $\calS^{n\times n}\subset \R^{n\times n}$ for the space of real symmetric $n\times n$ matrices, we assume that the functions
\begin{align*}
A=(a_{ij})_{1\leq i,j\leq n}:\R^n\times \Lambda&\rightarrow \calS^{n\times n},& (y,\alpha)&\mapsto A(y,\alpha)=:A^{\alpha}(y),\\
b=(b_i)_{1\leq i\leq n}:\R^n\times\Lambda&\rightarrow \R^n,& (y,\alpha)&\mapsto b(y,\alpha)=:b^{\alpha}(y),\\
c:\R^n\times\Lambda&\rightarrow \R,& (y,\alpha)&\mapsto c(y,\alpha)=:c^{\alpha}(y),\\
f:\R^n\times\Lambda&\rightarrow \R,& (y,\alpha)&\mapsto f(y,\alpha)=:f^{\alpha}(y)
\end{align*}
are $Y$-periodic in $y\in\R^n$ and uniformly continuous, i.e., $a_{ij},b_i,c,f\in C(\R^n\times \Lambda)$. Further, we assume that the zeroth-order coefficient is positive,
\begin{align*}
\inf_{\R^n\times \Lambda} c >0,
\end{align*}
that the matrix-valued function $A$ is uniformly elliptic,
\begin{align}\label{unifm ell}
\exists\,\zeta_1,\zeta_2>0:\quad \zeta_1 \lvert \xi\rvert^2\leq  A(y,\alpha)\xi \cdot \xi\leq \zeta_2 \lvert \xi\rvert^2\quad \forall y,\xi\in \R^n,\, \alpha\in \Lambda,
\end{align}
and that the Cordes condition
\begin{align}\label{Cordes periodic}
\lvert A\rvert^2+\frac{\lvert b\rvert^2}{2\lambda} + \frac{c^2}{\lambda^2}\leq \frac{1}{n+\delta}\left(\mathrm{tr}( A) + \frac{c}{\lambda}  \right)^{2}   \qquad\text{in }\R^n\times \Lambda
\end{align}
holds for some constants $\lambda>0$ and $\delta\in (0,1)$. We refer to \cite{SS14} for a brief discussion of the Cordes condition.

Let us introduce the function $\gamma\in C(\R^n\times\Lambda)$ defined by
\begin{align}\label{gamma first def}
\gamma:= \left(\lvert A\rvert^2+\frac{\lvert b\rvert^2}{2\lambda} + \frac{c^2}{\lambda^2}  \right)^{-1} \left(\mathrm{tr}( A) + \frac{c}{\lambda}\right).
\end{align}
We let $\gamma^{\alpha}:\R^n\rightarrow\R$, $\gamma^{\alpha}(y):=\gamma(y,\alpha)$ for $\alpha\in\Lambda$, and consider the renormalized Hamilton--Jacobi--Bellman equation
\begin{align}\label{HJB modified}
\sup_{\alpha\in\Lambda} \left\{\gamma^{\alpha}\left( -A^{\alpha}:D^2 u - b^{\alpha}\cdot \nabla u + c^{\alpha} u -f^{\alpha}\right)\right\} = 0 \quad &\text{in }Y,\qquad u \in H^2_{\mathrm{per}}(Y).
\end{align}
The function $\gamma$ takes the role of a (positive) multiplying factor for the equation; note that $\inf_{\R^n\times \Lambda}\gamma >0$. Hence $u\in H^2_{\mathrm{per}}(Y)$ is a solution of \eqref{HJB equ} if, and only if, it is a solution of \eqref{HJB modified}.

It can then be shown that the problem \eqref{HJB equ} is well-posed in the sense of strong solutions, following the steps of the proof of \cite[Theorem 3]{SS14} (note the sign-difference in the coefficient functions). The proof is omitted.

\begin{thrm}[Well-posedness]\label{thm well-pos}
In the situation described above, there exists a unique strong solution $u\in H^2_{\mathrm{per}}(Y)$ to the problem \eqref{HJB equ}. Further, $u$ is also the unique strong solution to the problem \eqref{HJB modified}.
\end{thrm}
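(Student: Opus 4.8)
The plan is to recast the renormalized equation \eqref{HJB modified} as a fixed-point problem for a contraction on $H^2_{\mathrm{per}}(Y)$, following the classical Cordes-condition argument of Smears--S\"uli \cite{SS14}. First I would note, as in \cite{SS14}, the elementary algebraic consequence of the Cordes condition \eqref{Cordes periodic}: writing $F_\gamma[w] := \sup_{\alpha\in\Lambda}\gamma^\alpha(-A^\alpha:D^2 w - b^\alpha\cdot\nabla w + c^\alpha w - f^\alpha)$, one has for all $w,v\in H^2_{\mathrm{per}}(Y)$ the pointwise estimate
\begin{align*}
\bigl| F_\gamma[w] - F_\gamma[v] - \bigl(\Delta(w-v) - \lambda(w-v)\bigr)\bigr| \le \sqrt{1-\delta}\,\bigl(|D^2(w-v)|^2 + 2\lambda|\nabla(w-v)|^2 + \lambda^2|w-v|^2\bigr)^{1/2}
\end{align*}
almost everywhere in $Y$, which uses that $|\gamma^\alpha A^\alpha - I|^2 + |\gamma^\alpha b^\alpha|^2/(2\lambda) + (\gamma^\alpha c^\alpha/\lambda + \text{(shift)})^2 \le 1-\delta$ is exactly the rescaled Cordes inequality, together with Cauchy--Schwarz over the quadratic form with weights $(1,2\lambda,\lambda^2)$.

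Second, I would invoke the well-posedness of the linear periodic problem: for each $g\in L^2(Y)$ with zero obstruction (none is needed here since $\lambda>0$), the equation $\Delta z - \lambda z = g$ admits a unique solution $z\in H^2_{\mathrm{per}}(Y)$, and the Miranda--Talenti-type estimate on the periodic cell gives $\|D^2 z\|_{L^2(Y)}^2 + 2\lambda\|\nabla z\|_{L^2(Y)}^2 + \lambda^2\|z\|_{L^2(Y)}^2 \le \|g\|_{L^2(Y)}^2$ (periodicity makes the boundary terms in the integration-by-parts identity $\int |D^2 z|^2 = \int (\Delta z)^2$ vanish). Define the map $T:H^2_{\mathrm{per}}(Y)\to H^2_{\mathrm{per}}(Y)$ by letting $Tw$ solve $\Delta(Tw) - \lambda(Tw) = \Delta w - \lambda w - F_\gamma[w]$. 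Then $u$ solves \eqref{HJB modified} iff $u$ is a fixed point of $T$. Combining the two estimates above, $\|T w - T v\|$ in the norm $\vertiii{z}^2 := \|D^2 z\|_{L^2(Y)}^2 + 2\lambda\|\nabla z\|_{L^2(Y)}^2 + \lambda^2\|z\|_{L^2(Y)}^2$ is bounded by $\sqrt{1-\delta}\,\vertiii{w-v}$, so $T$ is a contraction with constant $\sqrt{1-\delta}<1$, and the Banach fixed-point theorem yields existence and uniqueness of a strong solution to \eqref{HJB modified}.

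Third, the equivalence of \eqref{HJB equ} and \eqref{HJB modified} is immediate from $\inf_{\R^n\times\Lambda}\gamma>0$ (which follows from the uniform ellipticity bound $\mathrm{tr}(A)\ge n\zeta_1$, the positivity $\inf c>0$, and the continuity and periodicity of the coefficients, giving uniform upper bounds on $|A|,|b|,c$ over the compact set $\bar Y\times\Lambda$): the supremand is multiplied by a strictly positive bounded function, so the sign of the supremum is unchanged and $u\in H^2_{\mathrm{per}}(Y)$ solves one equation pointwise a.e.\ precisely when it solves the other. Hence the unique strong solution of \eqref{HJB modified} is the unique strong solution of \eqref{HJB equ}, proving both assertions. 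The main technical point to handle carefully is verifying that $F_\gamma[w]\in L^2(Y)$ for $w\in H^2_{\mathrm{per}}(Y)$ and that the supremum over the compact metric space $\Lambda$ is measurable — this follows from continuity of the coefficients in $\alpha$ and a standard argument (the sup of a family of Carath\'eodory functions, continuous in $\alpha$ over a separable compact space, is measurable) — and checking that the map $T$ indeed lands in $H^2_{\mathrm{per}}(Y)$, i.e.\ that the right-hand side lies in $L^2(Y)$, which the linear growth of $F_\gamma$ in $(D^2 w,\nabla w, w)$ guarantees. Since the paper states the proof is omitted and refers to \cite[Theorem 3]{SS14}, I would present only this sketch, noting the sign change in the coefficients relative to \cite{SS14} is cosmetic.
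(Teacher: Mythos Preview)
Your proposal is correct and follows exactly the approach the paper indicates, namely the Banach fixed-point argument of Smears--S\"uli \cite[Theorem~3]{SS14} adapted to the periodic cell (the paper omits the proof and simply refers to that source). One minor correction: the comparison operator should be $-\Delta+\lambda$ rather than $\Delta-\lambda$, so that the Cordes estimate reads $\bigl|F_\gamma[w]-F_\gamma[v]-\bigl(-\Delta(w-v)+\lambda(w-v)\bigr)\bigr|\le\sqrt{1-\delta}\,(\cdots)$ and the map is defined via $(-\Delta+\lambda)Tw=(-\Delta+\lambda)w-F_\gamma[w]$; with this sign fixed, your contraction argument goes through verbatim.
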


\subsection{Mixed formulation of the problem}\label{subsec mixed fem}

We construct a mixed finite element method for the numerical approximation of the strong periodic solution to \eqref{HJB equ} similarly to the scheme presented in \cite{GS19}. The mixed formulation relies on rewriting the problem \eqref{HJB modified} as
\begin{align}\label{mixed idea}
\sup_{\alpha\in\Lambda} \left\{\gamma^{\alpha}\left(-A^{\alpha}:D w -b^{\alpha}\cdot \nabla u  + c^{\alpha} u  -f^{\alpha}\right)\right\} = 0,\qquad w = \nabla u.
\end{align}
We denote the space of functions $v\in H^1_{\mathrm{per}}(Y)$ with zero mean over the unit cell $Y$ by
\begin{align*}
W_{\mathrm{per}}(Y):= \left\{v\in H^1_{\mathrm{per}}(Y):\; \int_Y v = 0\right\}.
\end{align*}
We further let $W_{\mathrm{per}}(Y;\R^n):= \left(W_{\mathrm{per}}(Y)\right)^n$ and denote the Jacobian of a function $w\in W_{\mathrm{per}}(Y;\R^n)$ by $Dw$. Noting that $Y=(0,1)^n$ is convex and $\mathrm{diam}(Y)=\sqrt{n}$, we have the Poincar\'e inequality (see \cite[Theorem 3.2]{Beb03}) for scalar functions,
\begin{align}\label{Poincare explicit scalar}
\|v\|_{L^2(Y)}\leq \frac{\sqrt{n}}{\pi}\|\nabla v\|_{L^2(Y)} \qquad\forall\, v\in W_{\mathrm{per}}(Y),
\end{align}
and the corresponding inequality for vector-valued functions,
\begin{align}\label{Poincare explicit}
\|w\|_{L^2(Y)}\leq \frac{\sqrt{n}}{\pi}\|D w\|_{L^2(Y)} \qquad\forall\, w\in W_{\mathrm{per}}(Y;\R^n).
\end{align}
Noting that a solution $u\in H^2_{\mathrm{per}}(Y)$ to \eqref{mixed idea} satisfies $w=\nabla u\in W_{\mathrm{per}}(Y;\R^n)$, we define the function space
\begin{align*}
X:= W_{\mathrm{per}}(Y;\R^n)\times H^1_{\mathrm{per}}(Y).
\end{align*}
Further, we let $M\subset W_{\mathrm{per}}(Y)$ be a closed linear subspace. Admissible choices include $M=\{0\}$ and $M=W_{\mathrm{per}}(Y)$.

The mixed formulation is defined as the following problem: Find $m\in M$ and $(w,u)\in X$ such that
\begin{align}\label{Mixed formulation}
\left\{ \begin{aligned} a\left((w,u),(w',u') \right)+ b(m,(w',u')) &= 0 &\quad &\forall \,(w',u')\in X,\\ b(m',(w,u))&=0&\quad & \forall\, m'\in M,\end{aligned}\right.
\end{align}
where the semilinear form $a:X\times X\rightarrow \R$ is given by
\begin{align*}
a\left((w,u),(w',u') \right):= \int_Y F_{\gamma}[(w,u)]\,L_{\lambda}(w',u')  + \sigma_1 \int_Y \mathrm{rot}(w)\cdot\mathrm{rot}(w')+\sigma_2 \int_Y \left(\nabla u - w\right)\cdot \left(\nabla u'-w'\right),
\end{align*}
and the bilinear form $b:M\times X\rightarrow \R$ is given by
\begin{align*}
b(m,(w,u)):=\int_Y \nabla m\cdot \left(\nabla u-w\right)
\end{align*}
for $(w,u),(w',u')\in X$ and $m\in M$. Here, we have used the operators
\begin{align*}
F_{\gamma}[(w,u)]&:=\sup_{\alpha\in\Lambda} \left\{\gamma^{\alpha}\left(-A^{\alpha}:D w -b^{\alpha}\cdot \nabla u  + c^{\alpha} u  -f^{\alpha}\right)\right\},\\ L_{\lambda}(w,u)&:= -\nabla\cdot w + \lambda u,
\end{align*}
acting on $(w,u)\in X$, and the positive constants
\begin{align*}
\sigma_1&:=\sigma_1(\delta):= 1-\frac{1}{2}\sqrt{1-\delta},\\ \sigma_2&:=\lambda\, \tilde{\sigma}_2(\delta):= \lambda\left(\frac{1-\sqrt{1-\delta}}{2}+\frac{1}{4\left(1-\sqrt{1-\delta}\right)}  \right).
\end{align*}
We proceed by showing well-posedness of this mixed formulation.

\subsection{Well-posedness of the mixed formulation}

We define a norm on the function space $X=W_{\mathrm{per}}(Y;\R^n)\times H^1_{\mathrm{per}}(Y)$ by
\begin{align*}
\vertiii{(w,u)}_{\lambda}^2:= \|Dw\|_{L^2(Y)}^2+2\lambda \|\nabla u\|_{L^2(Y)}^2+\lambda^2 \|u\|_{L^2(Y)}^2,\qquad (w,u)\in X.
\end{align*}
It is easy to verify that this does indeed define a norm on $X$. We observe that there holds
\begin{align}\label{Maxwell-type}
\|Dw\|_{L^2(Y)}^2 = \|\mathrm{rot}(w)\|_{L^2(Y)}^2+\|\nabla\cdot w\|_{L^2(Y)}^2\qquad \forall w\in H^1_{\mathrm{per}}(Y;\R^n),
\end{align}
which follows from the formal calculation (using integration by parts twice)
\begin{align*}
\int_Y \lvert Dw\rvert^2 - \int_Y \lvert \mathrm{rot}(w)\rvert^2 =\sum_{i,j=1}^n \int_Y  \partial_i w_j\, \partial_j w_i = \sum_{i,j=1}^n \int_Y  \partial_i w_i\, \partial_j w_j = \int_Y \lvert\nabla\cdot w\rvert^2,
\end{align*}
and a density argument. Note that compared to the usual Maxwell-type inequality \cite{CDa99}, we have obtained the equality \eqref{Maxwell-type} thanks to periodicity. Next, we derive two preliminary estimates.

\begin{lmm}[Preliminary estimates]\label{Preliminary estimates}
Let $(w,u),(w',u')\in X$ and $\rho\in (0,2)$. Then, there holds
\begin{align}\label{key Cordes implication}
\|F_{\gamma}[(w,u)]-F_{\gamma}[(w',u')]-L_{\lambda}(w-w',u-u')\|_{L^2(Y)}\leq \sqrt{1-\delta}\,\vertiii{(w-w',u-u')}_{\lambda},
\end{align}
and we have the Miranda--Talenti-type estimate
\begin{align}\label{MT}
\frac{2-\rho}{2}\vertiii{(w,u)}_{\lambda}^2\leq \|\mathrm{rot}(w)\|_{L^2(Y)}^2 + \|L_{\lambda}(w,u)\|_{L^2(Y)}^2 +\frac{\lambda}{\rho}\|\nabla u - w\|_{L^2(Y)}^2.
\end{align}
\end{lmm}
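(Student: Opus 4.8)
The plan is to prove both inequalities by first establishing pointwise (algebraic) versions and then integrating over $Y$.

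\emph{The estimate \eqref{key Cordes implication}.} I would argue pointwise in $y\in Y$ with a triple $(M,q,r)\in\calS^{n\times n}\times\R^n\times\R$, which at the end is taken to be $\bigl(D(w-w')(y),\nabla(u-u')(y),(u-u')(y)\bigr)$. For each fixed $\alpha$ the map $(M,q,r)\mapsto\gamma^\alpha(-A^\alpha:M-b^\alpha\cdot q+c^\alpha r)$ is linear, and the correction $-\mathrm{tr}(M)+\lambda r$ is independent of $\alpha$, so the elementary bound $|\sup_\alpha g_\alpha-\sup_\alpha h_\alpha|\le\sup_\alpha|g_\alpha-h_\alpha|$ reduces the claim to controlling, for each $\alpha\in\Lambda$, the single linear expression
\[
(I-\gamma^\alpha A^\alpha):M-\gamma^\alpha b^\alpha\cdot q+(\gamma^\alpha c^\alpha-\lambda)r .
\]
Cauchy--Schwarz in $\calS^{n\times n}\times\R^n\times\R$ with the weights $1$, $\tfrac{1}{2\lambda}$, $\tfrac{1}{\lambda^2}$ bounds this by $(|M|^2+2\lambda|q|^2+\lambda^2r^2)^{1/2}$ times $\bigl(|I-\gamma^\alpha A^\alpha|^2+\tfrac{|\gamma^\alpha b^\alpha|^2}{2\lambda}+\tfrac{(\gamma^\alpha c^\alpha-\lambda)^2}{\lambda^2}\bigr)^{1/2}$. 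Expanding the square and using that, by \eqref{gamma first def}, $\gamma^\alpha$ is exactly the minimiser of $\gamma\mapsto(n+1)-2\gamma(\mathrm{tr}(A^\alpha)+c^\alpha/\lambda)+\gamma^2(|A^\alpha|^2+|b^\alpha|^2/(2\lambda)+(c^\alpha)^2/\lambda^2)$, this weighted norm squared collapses to $(n+1)-\gamma^\alpha(\mathrm{tr}(A^\alpha)+c^\alpha/\lambda)$; the Cordes condition \eqref{Cordes periodic} forces $\gamma^\alpha(\mathrm{tr}(A^\alpha)+c^\alpha/\lambda)\ge n+\delta$, so this quantity is at most $1-\delta$. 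Squaring the resulting pointwise inequality, substituting $(M,q,r)=\bigl(D(w-w')(y),\nabla(u-u')(y),(u-u')(y)\bigr)$ and integrating over $Y$ yields \eqref{key Cordes implication} after taking a square root; compactness of $\Lambda$ together with continuity of the coefficients makes the relevant pointwise suprema measurable, so the $L^2(Y)$ norms are well defined.

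\emph{The estimate \eqref{MT}.} Here I would start from the periodic Maxwell-type identity \eqref{Maxwell-type}, $\|Dw\|_{L^2(Y)}^2=\|\mathrm{rot}(w)\|_{L^2(Y)}^2+\|\nabla\cdot w\|_{L^2(Y)}^2$, and expand $\|L_\lambda(w,u)\|_{L^2(Y)}^2=\|\nabla\cdot w\|_{L^2(Y)}^2-2\lambda\int_Y(\nabla\cdot w)\,u+\lambda^2\|u\|_{L^2(Y)}^2$. Integration by parts over the periodic cell (no boundary term) rewrites $-\int_Y(\nabla\cdot w)\,u=\int_Y w\cdot\nabla u$, so that
\[
\|\mathrm{rot}(w)\|_{L^2(Y)}^2+\|L_\lambda(w,u)\|_{L^2(Y)}^2=\vertiii{(w,u)}_\lambda^2-2\lambda\int_Y\nabla u\cdot(\nabla u-w).
\]
The cross term is then absorbed by Young's inequality with parameter $\rho$: using $2\lambda\|\nabla u\|_{L^2(Y)}^2\le\vertiii{(w,u)}_\lambda^2$,
\[
2\lambda\int_Y\nabla u\cdot(\nabla u-w)\le\frac{\rho}{2}\vertiii{(w,u)}_\lambda^2+\frac{\lambda}{\rho}\|\nabla u-w\|_{L^2(Y)}^2 ,
\]
and rearranging gives \eqref{MT} (valid in fact for every $\rho>0$; the restriction $\rho\in(0,2)$ only ensures that the left-hand coefficient $\tfrac{2-\rho}{2}$ is positive).

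\emph{Main obstacle.} Everything apart from \eqref{key Cordes implication} is soft (Cauchy--Schwarz, periodic integration by parts, Young's inequality). The one step needing care is the algebra underlying \eqref{key Cordes implication}: checking that the renormalisation factor \eqref{gamma first def} is precisely the value making the coefficient vector $(I-\gamma^\alpha A^\alpha,\gamma^\alpha b^\alpha,\gamma^\alpha c^\alpha-\lambda)$ have weighted Euclidean norm $\le\sqrt{1-\delta}$. This is where the Cordes condition is used, and it must be carried out exactly, since the constant $\sqrt{1-\delta}$ propagates into all the subsequent well-posedness and error estimates.
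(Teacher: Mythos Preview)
Your proposal is correct and follows essentially the same route as the paper's proof: for \eqref{key Cordes implication} the paper also bounds the pointwise difference by $\sup_\alpha$ of the linear expression, applies the weighted Cauchy--Schwarz inequality, and reduces the coefficient factor to $(n+1)-T^2/S$ with $T=\mathrm{tr}(A^\alpha)+c^\alpha/\lambda$ and $S=|A^\alpha|^2+|b^\alpha|^2/(2\lambda)+(c^\alpha)^2/\lambda^2$ (your formulation $(n+1)-\gamma^\alpha T$ is the same number since $\gamma^\alpha=T/S$), then invokes Cordes; for \eqref{MT} the paper likewise combines \eqref{Maxwell-type} with the expansion of $\|L_\lambda(w,u)\|_{L^2(Y)}^2$, the periodic integration by parts $-\int_Y(\nabla\cdot w)\,u=\int_Y w\cdot\nabla u$, and Young's inequality with parameter $\rho$. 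Your extra remarks on measurability of the supremum and on why $\gamma^\alpha$ is the minimiser of the quadratic in $\gamma$ are valid elaborations but do not change the argument.
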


With these estimates in hand, we can proceed with showing essential properties of the maps $a$ and $b$, including monotonicity, Lipschitz continuity and an inf-sup condition, which will allow us to show well-posedness of the mixed formulation.

\begin{lmm}[Monotonicity, Lipschitz continuity and inf-sup condition]\label{Mon and Lip}
We have the following properties:
\begin{itemize}
\item[(i)] Monotonicity: For any $(w,u),(w',u')\in X$, we have
\begin{align*}
C_M\vertiii{(w-w',u-u')}_{\lambda}^2\leq a\left((w,u),(w-w',u-u')  \right)-a\left((w',u'),(w-w',u-u')  \right)
\end{align*}
with the monotonicity constant $C_M:=\frac{1}{4}\left(1-\sqrt{1-\delta}\right)>0$.
\item[(ii)] Lipschitz continuity: For any $(w,u),(w',u'),(z,v)\in X$, we have
\begin{align}\label{Lips}
\left\lvert a\left((w,u),(z,v)  \right)-a\left((w',u'),(z,v)  \right)\right\rvert\leq C_L \vertiii{(w-w',u-u')}_{\lambda} \vertiii{(z,v)}_{\lambda}
\end{align}
with the Lipschitz constant $C_L:= 2+\sqrt{2}\sqrt{1-\delta}+\sigma_1(\delta) +\tilde{\sigma}_2(\delta)\left(\frac{1}{2}+\frac{n}{\pi^2}\lambda \right) >0$.
\item[(iii)] Inf-sup condition: We have
\begin{align}\label{infsup}
\inf_{m'\in M\backslash \{0\}} \sup_{(w',u')\in X\backslash\{0\}} \frac{b(m',(w',u'))}{\|\nabla m'\|_{L^2(Y)}\,\vertiii{(w',u')}_{\lambda}}\geq c_b
\end{align}
with the inf-sup constant $c_b:=\lambda^{-\frac{1}{2}}\left(2+\frac{n}{\pi^2}\lambda\right)^{-\frac{1}{2}}>0$.
\end{itemize}
\end{lmm}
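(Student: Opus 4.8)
The plan is to establish the three assertions in turn, exploiting the two preliminary estimates from Lemma~\ref{Preliminary estimates} together with the Maxwell-type identity \eqref{Maxwell-type}. The key observation for (i) and (ii) is that the difference $a((w,u),(z,v))-a((w',u'),(z,v))$ splits into a nonlinear part coming from the $\int_Y (F_\gamma[(w,u)]-F_\gamma[(w',u')])\,L_\lambda(z,v)$ term and a purely bilinear part coming from the $\sigma_1$ and $\sigma_2$ terms, so one can treat the two parts separately and combine at the end.

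For monotonicity (i), I would set $(z,v)=(w-w',u-u')$ and write the first-order (in $L_\lambda$) contribution as $\int_Y L_\lambda(w-w',u-u')^2 = \|L_\lambda(w-w',u-u')\|_{L^2(Y)}^2$, and then control the discrepancy $\int_Y (F_\gamma[(w,u)]-F_\gamma[(w',u')]-L_\lambda(w-w',u-u'))\,L_\lambda(w-w',u-u')$ from below using Cauchy--Schwarz and \eqref{key Cordes implication}, yielding a lower bound of the form $\|L_\lambda\|^2 - \sqrt{1-\delta}\,\vertiii{\cdot}_\lambda\|L_\lambda\|$. Adding the manifestly nonnegative $\sigma_1\|\mathrm{rot}(w-w')\|^2 + \sigma_2\|\nabla(u-u')-(w-w')\|^2$ terms and then invoking the Miranda--Talenti estimate \eqref{MT} with a carefully chosen $\rho=\rho(\delta)$ should absorb everything into a positive multiple of $\vertiii{(w-w',u-u')}_\lambda^2$. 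The delicate point is the optimization over $\rho$: one needs $\sigma_1 = 1-\tfrac12\sqrt{1-\delta}$ and $\sigma_2 = \lambda\tilde\sigma_2(\delta)$ to be exactly the right weights so that, after using \eqref{MT}, the coefficient of $\vertiii{\cdot}_\lambda^2$ comes out as $C_M=\tfrac14(1-\sqrt{1-\delta})$; I expect this algebraic tuning (matching the quadratic-in-$\sqrt{1-\delta}$ expressions) to be the main obstacle, and it is presumably where the explicit forms of $\sigma_1,\sigma_2$ were reverse-engineered from.

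For Lipschitz continuity (ii), the estimate is more routine: bound the nonlinear term by $\|F_\gamma[(w,u)]-F_\gamma[(w',u')]\|_{L^2(Y)}\|L_\lambda(z,v)\|_{L^2(Y)}$, use \eqref{key Cordes implication} plus the triangle inequality to get $\|F_\gamma[(w,u)]-F_\gamma[(w',u')]\|_{L^2(Y)}\le (\sqrt{1-\delta}+\text{const})\vertiii{(w-w',u-u')}_\lambda$ where the constant accounts for $\|L_\lambda(w-w',u-u')\|_{L^2(Y)}\le \sqrt{2}\,\vertiii{(w-w',u-u')}_\lambda$ (this $\sqrt 2$ coming from $\|\nabla\cdot w\|^2+\lambda^2\|u\|^2\le \|Dw\|^2+\lambda^2\|u\|^2$ and the extra $2\lambda\|\nabla u\|^2$ slack), and similarly $\|L_\lambda(z,v)\|_{L^2(Y)}\le\sqrt 2\,\vertiii{(z,v)}_\lambda$. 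The two remaining bilinear terms are handled directly: $\sigma_1\|\mathrm{rot}(w-w')\|\|\mathrm{rot}(z)\|\le\sigma_1\vertiii{(w-w',u-u')}_\lambda\vertiii{(z,v)}_\lambda$ via \eqref{Maxwell-type}, and the $\sigma_2$ term by expanding $\|\nabla u-w\|\le\|\nabla u\|+\|w\|$, the Poincar\'e inequality \eqref{Poincare explicit} $\|w\|_{L^2(Y)}\le\frac{\sqrt n}{\pi}\|Dw\|_{L^2(Y)}$, and $\sigma_2=\lambda\tilde\sigma_2(\delta)$; collecting constants gives exactly $C_L=2+\sqrt2\sqrt{1-\delta}+\sigma_1(\delta)+\tilde\sigma_2(\delta)(\tfrac12+\tfrac{n}{\pi^2}\lambda)$.

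For the inf-sup condition (iii), given $m'\in M\setminus\{0\}\subset W_{\mathrm{per}}(Y)$, I would simply test with $(w',u')=(0,m')\in X$, so that $b(m',(0,m'))=\int_Y\nabla m'\cdot\nabla m'=\|\nabla m'\|_{L^2(Y)}^2$, while $\vertiii{(0,m')}_\lambda^2=2\lambda\|\nabla m'\|_{L^2(Y)}^2+\lambda^2\|m'\|_{L^2(Y)}^2\le(2\lambda+\tfrac{n}{\pi^2}\lambda^2)\|\nabla m'\|_{L^2(Y)}^2$ by the scalar Poincar\'e inequality \eqref{Poincare explicit scalar}. Dividing yields $b(m',(0,m'))/(\|\nabla m'\|_{L^2(Y)}\vertiii{(0,m')}_\lambda)\ge\|\nabla m'\|_{L^2(Y)}/(\lambda^{1/2}(2+\tfrac{n}{\pi^2}\lambda)^{1/2}\|\nabla m'\|_{L^2(Y)})=c_b$, which is the claimed bound; this part presents no real difficulty.
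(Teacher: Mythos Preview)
Your proposal is correct and follows essentially the same route as the paper's proof: the same splitting into nonlinear and bilinear parts, the same use of \eqref{key Cordes implication} and \eqref{MT} for (i), the same term-by-term bounds for (ii), and the same test pair $(0,m')$ for (iii). The one execution detail your sketch leaves implicit is that in (i) the paper applies \eqref{MT} \emph{twice}---first with $\rho=2-2\sqrt{1-\delta}$ (after a Young inequality on the cross term $-\sqrt{1-\delta}\,\vertiii{\cdot}_\lambda\|L_\lambda\|$) to balance the $\sigma_1,\sigma_2$ weights exactly, and then once more with $\rho=1$ to convert the resulting $\|L_\lambda\|^2+\|\mathrm{rot}\|^2+\lambda\|\nabla u-w\|^2$ into $\tfrac12\vertiii{\cdot}_\lambda^2$; this is precisely the ``algebraic tuning'' you flag as the main obstacle.
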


\begin{rmrk}[Local Lipschitz estimate]\label{local Lip}
Similarly, one obtains the local Lipschitz estimate
\begin{align*}
&\left\lvert a_{I}\left((w,u),(z,v)  \right)-a_{I}\left((w',u'),(z,v)  \right)\right\rvert\\ &\qquad\qquad\leq C_L'\left(\vertiii{(w-w',u-u')}_{\lambda,I}+\|w-w'\|_{L^2(I)}  \right)\left(\vertiii{(z,v)}_{\lambda,I}+\|z\|_{L^2(I)}  \right) 
\end{align*}
for all $(w,u),(w',u'),(z,v)\in X$ and any open $I\subset Y$ with a constant $C_L'=C_L'(\delta,\lambda,n)>0$. Here, the subscript $I$ in $a_{I}$ and $\vertiii{\cdot}_{\lambda,I}$ denotes that integrals in the corresponding definitions are taken over the set $I$.
\end{rmrk}

Now we are in a position to state the well-posedness of the mixed formulation, i.e., the existence and uniqueness of a solution $(m,(w,u))\in M\times X$ to \eqref{Mixed formulation}.

\begin{thrm}[Well-posedness of the mixed formulation]\label{wp of mixed}
The mixed formulation \eqref{Mixed formulation} admits a unique solution $(m,(w,u))\in M\times X$. Further, $m=0$, $u\in H^2_{\mathrm{per}}(Y)$ with $\nabla u=w$ and $u$ is the solution to \eqref{HJB equ}.
\end{thrm}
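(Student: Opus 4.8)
The plan is to verify the hypotheses of the standard abstract theory for mixed formulations of monotone problems (a Brezzi-type theorem for the nonlinear saddle-point structure), using precisely the three properties established in Lemma \ref{Mon and Lip}: strong monotonicity and Lipschitz continuity of $a$ on $X$, and the inf-sup condition for $b$. First I would set up the kernel space $X_0:=\{(w',u')\in X:\, b(m',(w',u'))=0 \ \forall m'\in M\}$ and observe that on $X_0$ the problem reduces to finding $(w,u)\in X_0$ with $a((w,u),(w',u'))=0$ for all $(w',u')\in X_0$; strong monotonicity (with constant $C_M$) together with Lipschitz continuity (with constant $C_L$) and the classical Browder--Minty/Zarantonello argument gives existence and uniqueness of this $(w,u)\in X_0$. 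Then the inf-sup condition \eqref{infsup} produces a unique multiplier $m\in M$ such that $b(m,(w',u'))=-a((w,u),(w',u'))$ for all $(w',u')\in X$, completing the solvability of \eqref{Mixed formulation}; uniqueness of the full triple follows by combining uniqueness of $(w,u)$ on the kernel with injectivity of $b(\cdot,\cdot)$ in its first slot, again from \eqref{infsup}.

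Next I would identify the solution. Taking $m'=m$ in the second equation of \eqref{Mixed formulation} and $(w',u')$ with $\nabla u'-w'$ chosen appropriately is not quite enough; instead, the cleaner route is: the second equation says $\int_Y \nabla m'\cdot(\nabla u - w)=0$ for all $m'\in M$. To show $m=0$, test the first equation with $(w',u')$ where $w'$ has $\mathrm{rot}(w')=0$, $\nabla\cdot w'$ controlled, and $u'$ such that $L_\lambda(w',u')=0$, so that the $F_\gamma$ term drops; the remaining terms are $\sigma_1\int\mathrm{rot}(w)\cdot\mathrm{rot}(w') + \sigma_2\int(\nabla u-w)\cdot(\nabla u'-w') + \int\nabla m\cdot(\nabla u'-w')$. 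A more robust approach, mirroring \cite{GS19}: first show $\nabla u = w$ by exploiting that the pair $(w,u)$ built as $w=\nabla u$ for the true strong solution $u\in H^2_{\mathrm{per}}(Y)$ of \eqref{HJB equ} is itself a solution of \eqref{Mixed formulation} with multiplier $m=0$ — one checks the first equation holds because $F_\gamma[(\nabla u,u)]=0$ a.e.\ (so the first integral vanishes), $\mathrm{rot}(\nabla u)=0$, and $\nabla u - w = 0$; the second equation holds trivially. By the uniqueness just proved, the abstract solution coincides with this one, hence $m=0$, $w=\nabla u$, $u\in H^2_{\mathrm{per}}(Y)$ solves \eqref{HJB equ}.

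The main obstacle I anticipate is the first step: rigorously invoking the nonlinear saddle-point solvability theorem. The difficulty is that $a$ is only a \emph{semilinear} form — linear in its second argument but genuinely nonlinear (through the $\sup_{\alpha\in\Lambda}$ inside $F_\gamma$) in its first — so the classical Brezzi theorem does not apply verbatim; one needs the monotone-operator version (e.g.\ the theory for $A+B^\ast$, $B$ with closed range, $A$ strongly monotone and Lipschitz). Concretely, I would reformulate \eqref{Mixed formulation} as an operator equation on $X\times M$, use the inf-sup condition to split $X=X_0\oplus X_0^\perp$ (orthogonal in $\vertiii{\cdot}_\lambda$), note that $b:M\to (X_0^\perp)^\ast$ is an isomorphism with norm bounds from $c_b$, and then solve the restricted strongly monotone Lipschitz problem on $X_0$ by Zarantonello's theorem. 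A secondary technical point is confirming that $F_\gamma$ maps $X$ into $L^2(Y)$ with the stated Lipschitz bound so that $a$ is well-defined and finite — but this is already contained in estimate \eqref{key Cordes implication} of Lemma \ref{Preliminary estimates} combined with boundedness of $L_\lambda$, so no new work is needed there.
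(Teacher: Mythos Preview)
Your existence/uniqueness argument via the nonlinear Brezzi splitting (solve on the kernel $X_0$ by Zarantonello, then recover the multiplier via the inf-sup condition) is exactly what the paper does, only the paper compresses this to a citation of \cite{BBF13} and \cite[Proposition 2.5]{GS19}.

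For the identification step you take a genuinely different route. You build the candidate directly: take the strong solution $u\in H^2_{\mathrm{per}}(Y)$ of \eqref{HJB equ} supplied by Theorem~\ref{thm well-pos}, set $w:=\nabla u$ and $m:=0$, verify that this triple satisfies \eqref{Mixed formulation} (all four pieces of the first equation vanish, the second is trivial), and conclude by the uniqueness just established. The paper instead argues from the abstract solution outward: it observes that $L_\lambda$ is surjective from the gradient pairs $X_g:=\{(w',u')\in X:\,w'=\nabla u'\}$ onto $L^2(Y)$, tests the first equation with $(w',u')\in X_g$ (where the $\mathrm{rot}$, $\nabla u'-w'$ and $b$ terms all drop) to force $F_\gamma[(w,u)]=0$ a.e., then tests with the solution $(w,u)$ itself (using $b(m,(w,u))=0$ from the second line) to force $\mathrm{rot}(w)=0$ and $\nabla u=w$; only at the very end does it invoke Theorem~\ref{thm well-pos} to name $u$ as the HJB solution. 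Your approach is shorter and avoids the surjectivity lemma for $-\Delta+\lambda$ on $H^2_{\mathrm{per}}(Y)$, at the price of using Theorem~\ref{thm well-pos} as an input rather than as a label at the end; the paper's approach is more self-contained in that it extracts $F_\gamma[(w,u)]=0$ and $w=\nabla u$ purely from the variational structure, which is a template that transfers more readily to the discrete setting where no a~priori strong solution is available.
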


\subsection{The discrete mixed formulation}\label{subsec the discrete}

We take closed linear subspaces $W_h\subset W_{\mathrm{per}}(Y;\R^n)$, $U_h\subset H^1_{\mathrm{per}}(Y)$, $M_h\subset U_h\cap M$ (recall that $M\subset W_{\mathrm{per}}(Y)$), and define
\begin{align*}
X_h:= W_h\times U_h\subset X.
\end{align*}
We then define the discrete mixed formulation as the following problem: Find $m_h\in M_h$ and $(w_h,u_h)\in X_h$ such that
\begin{align}\label{Discrete mixed formulation}
\left\{ \begin{aligned} a\left((w_h,u_h),(w'_h,u'_h) \right)+ b(m_h,(w'_h,u'_h)) &= 0 &\quad& \forall \,(w'_h,u'_h)\in X_h,\\ b(m'_h,(w_h,u_h))&=0 &\quad& \forall\, m'_h\in M_h.\end{aligned}\right.
\end{align}
We note that we have boundedness of $b$ and a discrete inf-sup condition.
\begin{lmm}[Boundedness of $b$ and discrete inf-sup condition]\label{lmm bddness and disinfsup}
For any $(m',(w',u'))\in M\times X$, we have
\begin{align*}
b(m',(w',u'))\leq C_b \|\nabla m'\|_{L^2(Y)}\vertiii{(w',u')}_{\lambda}
\end{align*}
with the constant $C_b:=\lambda^{-\frac{1}{2}}\left(\frac{1}{2}+\frac{n}{\pi^2}\lambda\right)^{\frac{1}{2}}>0$. Further, the discrete inf-sup condition
\begin{align}\label{dis infsup}
\inf_{m'_h\in M_h\backslash \{0\}} \sup_{(w'_h,u'_h)\in X_h\backslash\{0\}} \frac{b(m'_h,(w'_h,u'_h))}{\|\nabla m'_h\|_{L^2(Y)}\,\vertiii{(w'_h,u'_h)}_{\lambda}}\geq c_b
\end{align}
holds with $c_b>0$ as in Lemma \ref{Mon and Lip} (iii).
\end{lmm}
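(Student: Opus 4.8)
The plan is to establish the two assertions separately; both reduce to the Cauchy--Schwarz inequality together with the Poincar\'e inequalities \eqref{Poincare explicit scalar} and \eqref{Poincare explicit}, so no deep input is needed beyond careful bookkeeping of the constants.

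\emph{Boundedness of $b$.} Given $(m',(w',u'))\in M\times X$, I would first apply the Cauchy--Schwarz inequality to the definition of $b$ to get $b(m',(w',u'))\le \|\nabla m'\|_{L^2(Y)}\,\|\nabla u'-w'\|_{L^2(Y)}$, so that it remains to show $\|\nabla u'-w'\|_{L^2(Y)}^2\le C_b^2\,\vertiii{(w',u')}_\lambda^2$. To this end I would expand $\|\nabla u'-w'\|_{L^2(Y)}^2$ and apply Young's inequality with the weight $\eps:=2n\lambda/\pi^2$, obtaining
\begin{align*}
\|\nabla u'-w'\|_{L^2(Y)}^2\le (1+\eps)\,\|\nabla u'\|_{L^2(Y)}^2+\Big(1+\tfrac1\eps\Big)\,\|w'\|_{L^2(Y)}^2,
\end{align*}
and then bound $\|w'\|_{L^2(Y)}^2\le \tfrac{n}{\pi^2}\|Dw'\|_{L^2(Y)}^2$ by the vector-valued Poincar\'e inequality \eqref{Poincare explicit}, valid since $w'\in W_{\mathrm{per}}(Y;\R^n)$ has zero mean over $Y$. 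With this choice of $\eps$ one checks the identities $1+\eps=2\lambda C_b^2$ and $\big(1+\tfrac1\eps\big)\tfrac{n}{\pi^2}=C_b^2$, so the right-hand side is bounded by $C_b^2\big(2\lambda\|\nabla u'\|_{L^2(Y)}^2+\|Dw'\|_{L^2(Y)}^2\big)\le C_b^2\,\vertiii{(w',u')}_\lambda^2$, which is the claim.

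\emph{Discrete inf-sup condition.} Here the point is that the second component of $X_h$ already contains $M_h$, since $M_h\subset U_h$. Thus, for a fixed $m_h'\in M_h\setminus\{0\}$ I would simply test with the pair $(w_h',u_h'):=(0,m_h')\in W_h\times U_h=X_h$, which is admissible precisely because $0\in W_h$ and $m_h'\in U_h$. For this choice $b(m_h',(0,m_h'))=\int_Y\nabla m_h'\cdot\nabla m_h'=\|\nabla m_h'\|_{L^2(Y)}^2$, while, using the scalar Poincar\'e inequality \eqref{Poincare explicit scalar} for $m_h'\in M\subset W_{\mathrm{per}}(Y)$,
\begin{align*}
\vertiii{(0,m_h')}_\lambda^2=2\lambda\|\nabla m_h'\|_{L^2(Y)}^2+\lambda^2\|m_h'\|_{L^2(Y)}^2\le \lambda\Big(2+\tfrac{n\lambda}{\pi^2}\Big)\|\nabla m_h'\|_{L^2(Y)}^2.
\end{align*}
Dividing, the quotient in \eqref{dis infsup} for this test function is at least $\lambda^{-\frac12}\big(2+\tfrac{n\lambda}{\pi^2}\big)^{-\frac12}=c_b$; taking the supremum over $X_h\setminus\{0\}$ and then the infimum over $M_h\setminus\{0\}$ yields \eqref{dis infsup}. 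This is exactly the construction proving the continuous inf-sup bound in Lemma \ref{Mon and Lip}(iii), with $M$ in place of $M_h$ and $H^1_{\mathrm{per}}(Y)$ in place of $U_h$; the hypothesis $M_h\subset U_h\cap M$ is what makes it carry over verbatim to the discrete spaces.

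\emph{Main obstacle.} There is essentially none: both parts are short computations. The only step needing a little care is the boundedness estimate, where the Young weight must be chosen so that the two resulting terms assemble into $C_b^2\vertiii{\cdot}_\lambda^2$ with the \emph{stated} constant rather than a larger one; the value $\eps=2n\lambda/\pi^2$ is forced by equating the two constraints above. For the discrete inf-sup bound, no discrete compatibility condition (Fortin operator or similar) is required, since the test function $(0,m_h')$ lies in $X_h$ by construction — this is the reason the constant $c_b$ is inherited unchanged from the continuous level.
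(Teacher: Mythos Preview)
Your proof is correct and matches the paper's argument in substance. The discrete inf-sup part is identical to the paper's (test with $(0,m_h')$ and invoke \eqref{Poincare explicit scalar}); for the boundedness part the paper writes $\|\nabla u'-w'\|_{L^2}\le\|\nabla u'\|_{L^2}+\tfrac{\sqrt{n}}{\pi}\|Dw'\|_{L^2}$ via the triangle and Poincar\'e inequalities and then applies the Cauchy--Schwarz inequality in $\R^2$ with weights $(\tfrac{1}{\sqrt{2\lambda}},\tfrac{\sqrt{n}}{\pi})$, whereas you square first and use Young's inequality with the matching parameter $\eps=2n\lambda/\pi^2$ --- these are the same computation in different dress and yield the same constant $C_b$.
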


It follows that we have well-posedness of the discrete mixed formulation analogously to Theorem \ref{wp of mixed}. We also obtain an error bound.

\begin{thrm}[Well-posedness and error bound]\label{Thm error bd}
There exists a unique solution $(m_h,(w_h,u_h))\in M_h\times X_h$ to the discrete mixed formulation \eqref{Discrete mixed formulation}. Further, we have
\begin{align*}
\vertiii{(w-w_h,u-u_h)}_{\lambda}\leq C_e \inf_{(w'_h,u'_h)\in X_h}\vertiii{(w-w'_h,u-u'_h)}_{\lambda},
\end{align*}
where $(m,(w,u))\in M\times X$ denotes the solution to \eqref{Mixed formulation} and $C_e=C_e(\delta,\lambda,n)>0$ is the constant
\begin{align*}
C_e:=2\frac{C_L}{C_M}\left(1+\frac{C_b}{c_b}\right)
\end{align*}
with $C_L,C_M,C_b,c_b>0$ from Lemmata \ref{Mon and Lip} and \ref{lmm bddness and disinfsup}.
\end{thrm}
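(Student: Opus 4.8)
The plan is to read \eqref{Discrete mixed formulation} as a conforming discretization of the monotone saddle-point problem \eqref{Mixed formulation} and to run the classical Brezzi-type argument entirely at the discrete level, using the monotonicity and Lipschitz bounds of Lemma \ref{Mon and Lip}, together with the boundedness of $b$ and the discrete inf-sup condition of Lemma \ref{lmm bddness and disinfsup}. The structure parallels the proof of Theorem \ref{wp of mixed}, but every ingredient is now available discretely.

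\emph{Well-posedness.} I would first introduce the discrete kernel $Z_h:=\{(w_h',u_h')\in X_h:\ b(m_h',(w_h',u_h'))=0\ \ \forall\, m_h'\in M_h\}$, which is a closed subspace of $X_h$ containing $0$. On $Z_h$ the first line of \eqref{Discrete mixed formulation} reduces to $a((w_h,u_h),(w_h',u_h'))=0$ for all test functions in $Z_h$, since $b(m_h,\cdot)$ vanishes there. By Lemma \ref{Mon and Lip}(i),(ii) the map $(w_h,u_h)\mapsto a((w_h,u_h),\cdot)$ is strongly monotone and Lipschitz continuous on the Hilbert space $(Z_h,\vertiii{\cdot}_\lambda)$, so the Browder--Minty theorem (equivalently, a Banach fixed-point argument for a suitably damped iteration) gives a unique $(w_h,u_h)\in Z_h$ solving it. The linear functional $-a((w_h,u_h),\cdot)$ on $X_h$ then annihilates $Z_h$, so the discrete inf-sup condition \eqref{dis infsup} provides a unique $m_h\in M_h$ with $b(m_h,\cdot)=-a((w_h,u_h),\cdot)$ on $X_h$; the pair $(m_h,(w_h,u_h))$ solves \eqref{Discrete mixed formulation}, and applying the same reasoning to an arbitrary solution yields uniqueness.

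\emph{Quasi-optimality.} Since the continuous solution satisfies $m=0$ (Theorem \ref{wp of mixed}), testing \eqref{Mixed formulation} against $X_h\subset X$ gives the Galerkin identity $a((w,u),(w_h',u_h'))=0$ for all $(w_h',u_h')\in X_h$, while $b(m_h',(w-w_h,u-u_h))=0$ for all $m_h'\in M_h$ because $M_h\subset M$. Fix an arbitrary $(w_h',u_h')\in X_h$ and set $\zeta_h:=(w_h-w_h',u_h-u_h')\in X_h$. From the two orthogonalities one checks that $b(m_h',\zeta_h)=b(m_h',(w-w_h',u-u_h'))$ for all $m_h'\in M_h$, so this functional on $M_h$ has dual norm at most $C_b\vertiii{(w-w_h',u-u_h')}_\lambda$ by boundedness of $b$. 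The discrete inf-sup condition \eqref{dis infsup} then supplies $\zeta_h^\perp\in X_h$ with the same $M_h$-moments as $\zeta_h$ and $\vertiii{\zeta_h^\perp}_\lambda\le (C_b/c_b)\vertiii{(w-w_h',u-u_h')}_\lambda$, so that $\zeta_h^0:=\zeta_h-\zeta_h^\perp\in Z_h$. Applying monotonicity (Lemma \ref{Mon and Lip}(i)) to the pair $(w_h,u_h)$ and $(w_h,u_h)-\zeta_h^0$, using $a((w_h,u_h),\zeta_h^0)=0$ (as $\zeta_h^0\in Z_h$) and $a((w,u),\zeta_h^0)=0$ (Galerkin), and then Lipschitz continuity (Lemma \ref{Mon and Lip}(ii)), one obtains
\[
C_M\vertiii{\zeta_h^0}_\lambda^2\le a((w,u),\zeta_h^0)-a\big((w_h',u_h')+\zeta_h^\perp,\zeta_h^0\big)\le C_L\,\vertiii{(w-w_h',u-u_h')-\zeta_h^\perp}_\lambda\,\vertiii{\zeta_h^0}_\lambda,
\]
hence $\vertiii{\zeta_h^0}_\lambda\le (C_L/C_M)(1+C_b/c_b)\vertiii{(w-w_h',u-u_h')}_\lambda$. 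Two triangle inequalities, in the form $\vertiii{(w-w_h,u-u_h)}_\lambda\le\vertiii{(w-w_h',u-u_h')}_\lambda+\vertiii{\zeta_h^0}_\lambda+\vertiii{\zeta_h^\perp}_\lambda$, together with the elementary bound $C_L/C_M\ge 1$ (valid since $C_M<\tfrac14<2<C_L$), then give $\vertiii{(w-w_h,u-u_h)}_\lambda\le C_e\vertiii{(w-w_h',u-u_h')}_\lambda$; taking the infimum over $(w_h',u_h')\in X_h$ finishes the proof.

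\emph{Main obstacle.} The only genuinely delicate point is bridging the gap between $X_h$ and the discrete kernel $Z_h$: this is where the discrete inf-sup condition \eqref{dis infsup} enters twice, once to solve for the multiplier $m_h$ and once to construct the lifting $\zeta_h^\perp$, and without it the quasi-optimality constant would not be controllable. The remaining steps are bookkeeping with the explicit constants of Lemmata \ref{Mon and Lip} and \ref{lmm bddness and disinfsup}, the only nonlinear input being the use of Browder--Minty solvability on $Z_h$ in place of the Lax--Milgram lemma.
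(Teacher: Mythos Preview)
Your proof is correct and reaches the stated constant $C_e$, but the route differs from the paper's. The paper separates the argument into two stages: first, it bounds the error by the best approximation from the discrete kernel $Z_h$ via monotonicity and Lipschitz continuity, obtaining the factor $C_L/C_M$; second, it relates the $Z_h$-best-approximation to the $X_h$-best-approximation by introducing an auxiliary \emph{linear} mixed problem based on the inner product $\langle\cdot,\cdot\rangle_\lambda$ and invoking classical linear mixed finite element theory, which contributes the factor $2(1+C_b/c_b)$. You instead bypass the auxiliary linear problem entirely: for an arbitrary $(w_h',u_h')\in X_h$ you use the discrete inf-sup condition directly to construct a lifting $\zeta_h^\perp$ of the $M_h$-moments of $\zeta_h$, split $\zeta_h=\zeta_h^0+\zeta_h^\perp$ with $\zeta_h^0\in Z_h$, and run monotonicity/Galerkin/Lipschitz in a single step. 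Your approach is more self-contained (no black-box appeal to linear Brezzi theory) and arguably sharper, yielding $(1+C_b/c_b)(1+C_L/C_M)$ before you absorb it into $C_e$ via $C_L\ge C_M$; the paper's two-stage decomposition, on the other hand, cleanly isolates where the nonlinearity is handled from where the saddle-point structure is handled, which is conceptually tidy and mirrors how the constant $C_e$ is assembled.
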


\begin{rmrk}\label{rk explicit Ce}
Note that the error constant $C_e=C_e(\delta,\lambda,n)$ is monotonically increasing in $\lambda$.
\end{rmrk}

Besides this \textit{a priori} error bound, the monotonicity property from Lemma \ref{Mon and Lip} allows us to obtain an \textit{a posteriori} error bound.

\begin{thrm}[\textit{a posteriori} error bound and efficiency]\label{thrm a posteriori}
For the solution $(m,(w,u))\in M\times X$ to the mixed formulation \eqref{Mixed formulation} and the solution $(m_h,(w_h,u_h))\in M_h\times X_h$ to the discrete mixed formulation \eqref{Discrete mixed formulation}, we have the error bound
\begin{align*}
\vertiii{(w-w_h,u-u_h)}_{\lambda}^2\leq 2\,C_M^{-1} \left(C_M^{-1}\left\|F_{\gamma}[(w_h,u_h)]\right\|_{L^2(Y)}^2 + \sigma_1 \left\|\mathrm{rot}(w_h)\right\|_{L^2(Y)}^2 + \sigma_2\left\|w_h-\nabla u_h\right\|_{L^2(Y)}^2  \right)
\end{align*}
and the efficiency estimate
\begin{align*}
&\frac{1}{2}\left\|F_{\gamma}[(w_h,u_h)]\right\|_{L^2(Y)}^2 + \sigma_1\left\|\mathrm{rot}(w_h)\right\|_{L^2(Y)}^2 +\sigma_2 \left\|w_h-\nabla u_h\right\|_{L^2(Y)}^2 \\ &\qquad\qquad\leq \left(C_L+\frac{1-\delta}{2}\right)\vertiii{(w-w_h,u-u_h)}_{\lambda}^2,
\end{align*}
where $C_M,C_L>0$ are the constants from Lemma \ref{Mon and Lip}.
\end{thrm}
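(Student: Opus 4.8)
The plan is to exploit that the exact mixed solution $(m,(w,u))$ provided by Theorem \ref{wp of mixed} satisfies $m=0$, $w=\nabla u$, and that $u$ solves \eqref{HJB modified}; hence for the exact solution the three residual quantities vanish, $F_{\gamma}[(w,u)]=0$, $\mathrm{rot}(w)=\mathrm{rot}(\nabla u)=0$ and $w-\nabla u=0$, and moreover the first line of \eqref{Mixed formulation} with $m=0$ reduces to $a((w,u),(w',u'))=0$ for all $(w',u')\in X$. Both estimates are obtained by inserting the error $e:=(w-w_h,u-u_h)\in X$ (and, for efficiency, $-e$) into the discrete residual $(w',u')\mapsto a((w_h,u_h),(w',u'))$ and comparing with $a((w,u),\cdot)=0$: monotonicity gives the upper bound on $\vertiii{e}_{\lambda}$, Lipschitz continuity gives the lower bound on the estimator. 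The common algebraic step is to expand $-a((w_h,u_h),e)$ using the definition of $a$; the identities $\mathrm{rot}(w-w_h)=-\mathrm{rot}(w_h)$ and $\nabla(u-u_h)-(w-w_h)=w_h-\nabla u_h$ (both consequences of $w=\nabla u$ and $\mathrm{rot}\circ\nabla=0$) turn the last two contributions into $+\sigma_1\|\mathrm{rot}(w_h)\|_{L^2(Y)}^2+\sigma_2\|w_h-\nabla u_h\|_{L^2(Y)}^2$ exactly, so that $-a((w_h,u_h),e)=-\int_Y F_{\gamma}[(w_h,u_h)]\,L_{\lambda}(e)+\sigma_1\|\mathrm{rot}(w_h)\|_{L^2(Y)}^2+\sigma_2\|w_h-\nabla u_h\|_{L^2(Y)}^2$.

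For the \emph{a posteriori} bound, monotonicity (Lemma \ref{Mon and Lip}(i)) applied to the pair $((w,u),(w_h,u_h))$ together with $a((w,u),e)=0$ yields $C_M\vertiii{e}_{\lambda}^2\le-a((w_h,u_h),e)$. The only remaining term to handle is $-\int_Y F_{\gamma}[(w_h,u_h)]\,L_{\lambda}(e)$: by the key Cordes implication \eqref{key Cordes implication} with the arguments $((w_h,u_h),(w,u))$ and $F_{\gamma}[(w,u)]=0$ one has $L_{\lambda}(e)=r-F_{\gamma}[(w_h,u_h)]$ with $\|r\|_{L^2(Y)}\le\sqrt{1-\delta}\,\vertiii{e}_{\lambda}$, so Cauchy--Schwarz gives $-\int_Y F_{\gamma}[(w_h,u_h)]\,L_{\lambda}(e)\le\|F_{\gamma}[(w_h,u_h)]\|_{L^2(Y)}^2+\sqrt{1-\delta}\,\|F_{\gamma}[(w_h,u_h)]\|_{L^2(Y)}\vertiii{e}_{\lambda}$. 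Absorbing the cross term into $\tfrac{C_M}{2}\vertiii{e}_{\lambda}^2$ via Young's inequality leaves $\tfrac{C_M}{2}\vertiii{e}_{\lambda}^2\le(1+\tfrac{1-\delta}{2C_M})\|F_{\gamma}[(w_h,u_h)]\|_{L^2(Y)}^2+\sigma_1\|\mathrm{rot}(w_h)\|_{L^2(Y)}^2+\sigma_2\|w_h-\nabla u_h\|_{L^2(Y)}^2$, and the claimed bound follows after checking the elementary inequality $1+\tfrac{1-\delta}{2C_M}\le C_M^{-1}$, equivalently $C_M+\tfrac{1-\delta}{2}\le1$, which holds since $C_M=\tfrac14(1-\sqrt{1-\delta})<\tfrac14$ and $\tfrac{1-\delta}{2}<\tfrac12$.

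For the efficiency estimate I run the same expansion in the opposite direction. By Lipschitz continuity (Lemma \ref{Mon and Lip}(ii)) applied to $((w_h,u_h),(w,u))$ with test function $-e$, together with $a((w,u),-e)=0$ and linearity of $a$ in its second argument, $-a((w_h,u_h),e)\le C_L\vertiii{e}_{\lambda}^2$. Using the same identity for $-a((w_h,u_h),e)$ but now the \emph{lower} estimate $-\int_Y F_{\gamma}[(w_h,u_h)]\,L_{\lambda}(e)=\|F_{\gamma}[(w_h,u_h)]\|_{L^2(Y)}^2-\int_Y F_{\gamma}[(w_h,u_h)]\,r\ge\|F_{\gamma}[(w_h,u_h)]\|_{L^2(Y)}^2-\sqrt{1-\delta}\,\|F_{\gamma}[(w_h,u_h)]\|_{L^2(Y)}\vertiii{e}_{\lambda}$, one obtains $\|F_{\gamma}[(w_h,u_h)]\|_{L^2(Y)}^2+\sigma_1\|\mathrm{rot}(w_h)\|_{L^2(Y)}^2+\sigma_2\|w_h-\nabla u_h\|_{L^2(Y)}^2\le C_L\vertiii{e}_{\lambda}^2+\sqrt{1-\delta}\,\|F_{\gamma}[(w_h,u_h)]\|_{L^2(Y)}\vertiii{e}_{\lambda}$. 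Young's inequality in the form $\sqrt{1-\delta}\,\|F_{\gamma}[(w_h,u_h)]\|_{L^2(Y)}\vertiii{e}_{\lambda}\le\tfrac12\|F_{\gamma}[(w_h,u_h)]\|_{L^2(Y)}^2+\tfrac{1-\delta}{2}\vertiii{e}_{\lambda}^2$ then moves $\tfrac12\|F_{\gamma}[(w_h,u_h)]\|_{L^2(Y)}^2$ to the left-hand side and produces exactly the stated estimate with constant $C_L+\tfrac{1-\delta}{2}$.

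The main point is essentially bookkeeping rather than a genuine obstacle: one must notice that testing against the error $e$ reproduces the estimator's $\mathrm{rot}$ and $w_h-\nabla u_h$ contributions verbatim and with the correct sign — precisely because $\mathrm{rot}(w)=0$ and $w=\nabla u$ — so that the only analytic input is the Cordes decomposition of $L_{\lambda}(e)$ against $F_{\gamma}[(w_h,u_h)]$ together with two applications of Young's inequality. The care required is in keeping the constants sharp, in particular in verifying $C_M+\tfrac{1-\delta}{2}\le1$, which is what legitimises the factor $C_M^{-1}$ in front of $\|F_{\gamma}[(w_h,u_h)]\|_{L^2(Y)}^2$ in the reliability bound. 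No regularity of $(w_h,u_h)$ beyond $X_h\subset X$ is used, consistent with the estimator being built solely from $L^2(Y)$-norms of the computed discrete quantities.
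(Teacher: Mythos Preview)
Your argument is correct and follows essentially the same route as the paper: monotonicity plus the identity $-a((w_h,u_h),e)=-\int_Y F_{\gamma}[(w_h,u_h)]\,L_{\lambda}(e)+\sigma_1\|\mathrm{rot}(w_h)\|^2+\sigma_2\|w_h-\nabla u_h\|^2$ for reliability, Lipschitz continuity plus the same identity for efficiency. The only difference is in the reliability step: the paper bounds $-\int_Y F_{\gamma}[(w_h,u_h)]\,L_{\lambda}(e)$ directly via Cauchy--Schwarz and the crude estimate $\|L_{\lambda}(e)\|_{L^2(Y)}\le\sqrt{2}\,\vertiii{e}_{\lambda}$ from \eqref{L_lambda est}, whereas you reuse the Cordes decomposition \eqref{key Cordes implication} (as in the efficiency part) and then need the extra numerical check $C_M+\tfrac{1-\delta}{2}\le 1$ to recover the stated constant $C_M^{-1}$; both routes give the same final bound, the paper's being slightly more direct.
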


\begin{rmrk}[Local efficiency]\label{rmrk local efficiency}
Similarly, one obtains the local efficiency estimate
\begin{align*}
\frac{1}{2}\left\|F_{\gamma}[(w_h,u_h)]\right\|_{L^2(I)}^2&+ \sigma_1\left\|\mathrm{rot}(w_h)\right\|_{L^2(I)}^2 +\sigma_2 \left\|w_h-\nabla u_h\right\|_{L^2(I)}^2 \\ &\leq \left(2C_L'+\frac{1-\delta}{2}\right)\left(\vertiii{(w-w_h,u-u_h)}_{\lambda,I}^2+ \left\|w-w_h\right\|_{L^2(I)}^2\right)
\end{align*}
for any open $I\subset Y$, where $C_L'>0$ is the constant from Remark \ref{local Lip}.
\end{rmrk}

\section{Numerical Homogenization of HJB Equations}\label{Section NumHom}

\subsection{Framework}\label{subsec fra}

Let $\Omega\subset \R^n$ be a bounded convex domain in dimension $n\in\{2,3\}$ and let $\Lambda$ be a compact metric space. For $\eps>0$ small, we consider problems of the form
\begin{align}\label{equ}
\left\{ \begin{aligned}\sup_{\alpha\in\Lambda} \left\{ -A^{\alpha}\left(\,\cdot\,,\frac{\cdot}{\eps}\right):D^2 u_{\eps} - b^{\alpha}\left(\,\cdot\,,\frac{\cdot}{\eps}\right)\cdot \nabla u_{\eps} + u_{\eps} -f^{\alpha}\left(\,\cdot\,,\frac{\cdot}{\eps}\right)\right\} &= 0 &\quad &\text{in }\Omega,\\ \hfill u_{\eps}&= 0 &\quad  &\text{on }\partial\Omega,\end{aligned}\right.
\end{align}
where we assume that the functions
\begin{align*}
A=(a_{ij})_{1\leq i,j\leq n}:\bar{\Omega}\times\R^n\times \Lambda&\rightarrow \calS^{n\times n},& (x,y,\alpha)&\mapsto A(x,y,\alpha)=:A^{\alpha}(x,y),\\
b=(b_i)_{1\leq i\leq n}:\bar{\Omega}\times\R^n\times\Lambda&\rightarrow \R^n,& (x,y,\alpha)&\mapsto b(x,y,\alpha)=:b^{\alpha}(x,y),\\
f:\bar{\Omega}\times \R^n\times\Lambda&\rightarrow \R,& (x,y,\alpha)&\mapsto f(x,y,\alpha)=:f^{\alpha}(x,y)
\end{align*}
satisfy the following assumptions:
\begin{itemize}
\item[(i)] Continuity: $A,\,b,\,f$ are continuous on $\bar{\Omega}\times \R^n\times\Lambda$;
\item[(ii)] Periodicity: $A^{\alpha}(x,\cdot),\,b^{\alpha}(x,\cdot),\,f^{\alpha}(x,\cdot)$ are $Y$-periodic for fixed $\alpha\in\Lambda$ and $x\in\bar{\Omega}$;
\item[(iii)] Regularity: $A^{\alpha},\,b^{\alpha},\,f^{\alpha}$ are Lipschitz on $\bar{\Omega}\times\R^n$ uniformly in $\alpha\in\Lambda$;
\item[(iv)] Ellipticity: There exist $\zeta_1,\zeta_2>0$ such that $\zeta_1 \lvert \xi\rvert^2\leq  A\xi \cdot \xi\leq \zeta_2 \lvert \xi\rvert^2$ in $\bar{\Omega}\times\R^n\times \Lambda$ for all $\xi\in\R^n$.
\end{itemize}
Further, it is assumed that the Cordes condition
\begin{align}\label{Cordes}
\lvert A\rvert^2+\frac{\lvert b\rvert^2}{2\lambda} + \frac{1}{\lambda^2}\leq \frac{1}{n+\delta} \left(\mathrm{tr}( A) + \frac{1}{\lambda}  \right)^2   \qquad\text{in }\bar{\Omega}\times\R^n\times \Lambda
\end{align}
holds for some constants $\lambda>0$ and $\delta\in (0,1)$. Then, we have well-posedness in the sense of strong solutions; see \cite{SS14}.
\begin{thrm}[Existence and uniqueness of strong solutions]\label{well posedness}
In this situation, for any given $\eps>0$, there exists a unique strong solution $u_{\eps}\in H^2(\Omega)\cap H^1_0(\Omega)$ to \eqref{equ}.
\end{thrm}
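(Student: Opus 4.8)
The plan is to follow the Cordes-condition fixed-point argument of Smears and S\"{u}li \cite{SS14}, adapted in the obvious way to the Dirichlet problem on the convex domain $\Omega$. The parameter $\eps>0$ is fixed and plays no special role: for fixed $\eps$, the functions $x\mapsto A^{\alpha}(x,x/\eps)$, $x\mapsto b^{\alpha}(x,x/\eps)$, $x\mapsto f^{\alpha}(x,x/\eps)$ form one admissible family of continuous, uniformly elliptic coefficients on $\bar\Omega$ satisfying the Cordes condition \eqref{Cordes} with zeroth-order coefficient identically equal to $1$; I shall simply write $A^{\alpha},b^{\alpha},f^{\alpha}$ for them. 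First I would renormalise: set
\[
\gamma^{\alpha}:=\Bigl(|A^{\alpha}|^2+\tfrac{|b^{\alpha}|^2}{2\lambda}+\tfrac{1}{\lambda^2}\Bigr)^{-1}\Bigl(\mathrm{tr}(A^{\alpha})+\tfrac1\lambda\Bigr),\qquad F_{\gamma}[w]:=\sup_{\alpha\in\Lambda}\gamma^{\alpha}\bigl(-A^{\alpha}:D^2w-b^{\alpha}\cdot\nabla w+w-f^{\alpha}\bigr),
\]
observe that $\inf_{\bar\Omega\times\Lambda}\gamma>0$ by ellipticity, continuity and compactness of $\Lambda$, and note that $u_{\eps}\in V:=H^2(\Omega)\cap H^1_0(\Omega)$ solves \eqref{equ} if and only if $F_{\gamma}[u_{\eps}]=0$ almost everywhere in $\Omega$.

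Next I would equip $V$ with the norm $\vertiii{w}^2:=\|D^2w\|_{L^2(\Omega)}^2+2\lambda\|\nabla w\|_{L^2(\Omega)}^2+\lambda^2\|w\|_{L^2(\Omega)}^2$ and record two facts. Writing $L_{\lambda}w:=-\Delta w+\lambda w$, the Miranda--Talenti inequality $\|D^2w\|_{L^2(\Omega)}\le\|\Delta w\|_{L^2(\Omega)}$ on the convex domain $\Omega$, together with $-\int_{\Omega}w\,\Delta w=\|\nabla w\|_{L^2(\Omega)}^2$ for $w\in V$, yields
\[
\|L_{\lambda}w\|_{L^2(\Omega)}^2=\|\Delta w\|_{L^2(\Omega)}^2+2\lambda\|\nabla w\|_{L^2(\Omega)}^2+\lambda^2\|w\|_{L^2(\Omega)}^2\ge\vertiii{w}^2,
\]
which, combined with $H^2$-regularity of the Poisson problem on convex domains, shows that $L_{\lambda}\colon V\to L^2(\Omega)$ is a bijection. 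The second, and central, fact is the Cordes estimate
\[
\bigl\|F_{\gamma}[w]-F_{\gamma}[\tilde w]-L_{\lambda}(w-\tilde w)\bigr\|_{L^2(\Omega)}\le\sqrt{1-\delta}\,\vertiii{w-\tilde w}\qquad\forall\,w,\tilde w\in V,
\]
established precisely as \eqref{key Cordes implication}: one bounds the difference of the two suprema pointwise by $\sup_{\alpha}\lvert\cdot\rvert$, and then applies the Cauchy--Schwarz inequality in the variables $\bigl(D^2(w-\tilde w),\nabla(w-\tilde w),w-\tilde w\bigr)$ against the vector that realises the Cordes inequality \eqref{Cordes}, the constant $\sqrt{1-\delta}$ arising exactly from the factor $\tfrac1{n+\delta}$ on the right-hand side of \eqref{Cordes}.

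With these in hand I would define $T\colon V\to V$ by $Tw:=L_{\lambda}^{-1}\bigl(L_{\lambda}w-F_{\gamma}[w]\bigr)$, which is well defined since $L_{\lambda}w-F_{\gamma}[w]\in L^2(\Omega)$; note that $u\in V$ is a fixed point of $T$ exactly when $F_{\gamma}[u]=0$, i.e.\ when $u$ is a strong solution of \eqref{equ}. Since $L_{\lambda}(Tw-T\tilde w)=L_{\lambda}(w-\tilde w)-\bigl(F_{\gamma}[w]-F_{\gamma}[\tilde w]\bigr)$, the lower bound for $L_{\lambda}$ and the Cordes estimate give $\vertiii{Tw-T\tilde w}\le\|L_{\lambda}(Tw-T\tilde w)\|_{L^2(\Omega)}\le\sqrt{1-\delta}\,\vertiii{w-\tilde w}$, so $T$ is a contraction on the Banach space $(V,\vertiii{\cdot})$ with contraction factor $\sqrt{1-\delta}<1$. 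Banach's fixed-point theorem then delivers the unique $u_{\eps}\in V$ with $Tu_{\eps}=u_{\eps}$, hence the existence and uniqueness of the strong solution. I expect the main obstacle to be the Cordes estimate: one must verify carefully that $F_{\gamma}[\cdot]$ is, in the $\vertiii{\cdot}$-norm, a $\sqrt{1-\delta}$-Lipschitz perturbation of the linear operator $L_{\lambda}$, uniformly over $\alpha\in\Lambda$ and $x\in\bar\Omega$; here the continuity of the coefficients and the compactness of $\Lambda$ are what guarantee that the pointwise supremum over $\alpha$ defines a measurable (indeed Carath\'eodory) function and that the constants $\lambda$ and $\delta$ in \eqref{Cordes} may be taken independent of $\alpha$ and of $x$. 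Everything else is a verbatim transcription of the periodic arguments of Section~\ref{Section FE schemes for periodic HJB}, with $H^2_{\mathrm{per}}(Y)$ replaced by $V=H^2(\Omega)\cap H^1_0(\Omega)$ and the periodic Miranda--Talenti estimate replaced by its counterpart on convex domains; accordingly the details may be omitted.
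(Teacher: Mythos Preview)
Your proposal is correct and is precisely the approach the paper takes: the paper does not prove Theorem~\ref{well posedness} but simply cites \cite{SS14}, and your sketch faithfully reproduces the Smears--S\"{u}li fixed-point argument (renormalisation by $\gamma$, Miranda--Talenti on convex domains, the Cordes contraction estimate, and Banach's fixed-point theorem).
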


\begin{rmrk}
Problems involving a non-constant zeroth-order coefficient, i.e., problems of the form
\begin{align*}
\left\{ \begin{aligned}\sup_{\alpha\in\Lambda} \left\{ -A^{\alpha}\left(\,\cdot\,,\frac{\cdot}{\eps}\right):D^2 v_{\eps} - b^{\alpha}\left(\,\cdot\,,\frac{\cdot}{\eps}\right)\cdot \nabla v_{\eps} + c^{\alpha}\left(\,\cdot\,,\frac{\cdot}{\eps}\right) v_{\eps} -f^{\alpha}\left(\,\cdot\,,\frac{\cdot}{\eps}\right)\right\} &= 0 &\quad &\text{in }\Omega,\\ \hfill v_{\eps}&= 0 &\quad  &\text{on }\partial\Omega,\end{aligned}\right.
\end{align*}
with $c^{\alpha}$ satisfying the same assumptions as the components of $b^{\alpha}$, and additionally $\inf_{\bar{\Omega}\times\R^n\times\Lambda}c>0$, can be reduced to a problem of the form \eqref{equ}. This is due to the fact that division by $c^{\alpha}(x,x/\eps)$ inside the argument of the supremum does not change the set of strong solutions.
\end{rmrk}

\subsection{Homogenization}\label{subsec homogenization}

In this section, we briefly recall known homogenization results from the literature. Let us start by recalling one of the several equivalent definitions of a viscosity solution; see \cite{Lii83}.

\begin{dfntn}[Viscosity solution]\label{dfntn visc}
Let $\Omega\subset\R^n$ be open and $F:\Omega\times\R\times\R^n\times\calS^{n\times n}\rightarrow \R$ be continuous. A continuous function $u:\Omega\rightarrow \R$, $u\in C(\bar{\Omega})$, is called a viscosity solution to the equation
\begin{align*}
F(x,u,\nabla u,D^2 u)=0\quad\text{in }\Omega,
\end{align*}
if for any $\phi\in C^2(\Omega)$ there holds
\begin{align*}
x_0\in\Omega \text{ local maximum point of }u-\phi \;\Longrightarrow\; F(x_0,u(x_0),\nabla \phi(x_0),D^2\phi(x_0))\leq 0,\\
x_0\in\Omega \text{ local minimum point of }u-\phi \;\Longrightarrow\; F(x_0,u(x_0),\nabla \phi(x_0),D^2\phi(x_0))\geq 0.
\end{align*}
\end{dfntn}

For an overview of the theory of viscosity solutions for second-order equations we refer the reader to \cite{CIL92}. Note that the strong solution $u_{\eps}\in H^2(\Omega)\cap H^1_0(\Omega)$ to \eqref{equ} belongs to $C(\bar{\Omega})$ in dimensions $n\in\{2,3\}$ and a natural question to ask is whether $u_{\eps}$ is a viscosity solution. In fact, it is known that if one has regularity $u_{\eps}\in W^{2,n}_{\mathrm{loc}}(\Omega)$, then $u_{\eps}$ is a viscosity solution to \eqref{equ}; see \cite{CCK96,Lii83,Lio83}. We also note that the viscosity solution to \eqref{equ} is unique; see \cite{Ish89}. We then have the following result; see \cite{Saf88}.

\begin{rmrk}[Regularity]\label{r:regularity}
Let $u_{\eps}\in H^2(\Omega)\cap H^1_0(\Omega)$ be the unique strong solution to \eqref{equ} given by Theorem \ref{well posedness}. Then
\begin{align*}
u_{\eps}\in C^{2,\tilde{\alpha}}(\Omega)\cap C(\bar{\Omega})
\end{align*}
for some $\tilde{\alpha}>0$ and $u_{\eps}$ is the unique viscosity solution to \eqref{equ}. Further, if $\partial\Omega\in C^{2,\beta}$ for some $\beta>0$, then $u_{\eps}\in C^{2,\tilde{\alpha}}(\bar{\Omega})$ for some $\tilde{\alpha}>0$.
\end{rmrk}

With this observation in hand, we can use the well-known homogenization results for viscosity solutions; see \cite{CSW05,Eva89,Eva92}.

\begin{thrm}[Homogenization of HJB problems]\label{thm Hom HJB}
The solution $u_{\eps}$ to \eqref{equ} converges uniformly on $\bar{\Omega}$ to the viscosity solution $u_0\in C(\bar{\Omega})$ of
\begin{align}\label{hom}
\left\{ \begin{aligned}u_0 + H(x,\nabla u_0, D^2 u_0) &= 0 &\quad &\text{in }\Omega,\\ \hfill u_0&= 0 &\quad  &\text{on }\partial\Omega,\end{aligned}\right.
\end{align}
with an effective Hamiltonian $H:\bar{\Omega}\times \R^n\times \calS^{n\times n}\rightarrow \R$ defined as follows: For given $(s,p,R)\in \bar{\Omega}\times \R^n\times \calS^{n\times n}$ we define $H(s,p,R)\in\R$ to be the unique real number such that there exists a function $v=v(\cdot\,;s,p,R)\in C(\R^n)$, a so-called corrector, that is a viscosity solution to
\begin{align}\label{corr v}
\sup_{\alpha\in\Lambda} \left\{-A_s^{\alpha}:D^2 v - g^{\alpha}_{s,p,R}\right\} = H(s,p,R) \quad \text{in }\R^n,\qquad v(\cdot\,;s,p,R) \text{ is $Y$-periodic},
\end{align}
where $A_s^{\alpha}(y):=A^{\alpha}(s,y)$ and $g^{\alpha}_{s,p,R}(y):=A^{\alpha}(s,y): R+ b^{\alpha}(s,y)\cdot p +f^{\alpha}(s,y)$ for $y\in \R^n$, $\alpha\in \Lambda$.
\end{thrm}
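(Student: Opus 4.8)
The plan is to follow the classical perturbed test function method of Evans \cite{Eva89,Eva92} (see also Caffarelli--Souganidis--Wang \cite{CSW05}), adapted to the present Cordes framework; since all the ingredients are by now standard, I would only sketch the argument. It decomposes into three parts: (a) solving the cell problem \eqref{corr v} to produce $H(s,p,R)$ together with a corrector $v$; (b) establishing the continuity of $H$ in $(s,p,R)$ and its monotonicity in $R$, which make the homogenized operator degenerate elliptic and proper, hence equip \eqref{hom} with a comparison principle and a unique solution $u_0\in C(\bar\Omega)$; and (c) running the perturbed test function argument to identify the half-relaxed limits of $u_\eps$ with $u_0$.

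For parts (a)--(b), I would fix $(s,p,R)$ and work with the approximate correctors $v^\sigma\in H^2_{\mathrm{per}}(Y)$ of \eqref{intro appr cor}, which fit the framework of Theorem \ref{thm well-pos} (the Cordes condition survives with $\lambda$ replaced by $\sigma\lambda$) and hence exist and are unique. Boundedness of the coefficients bounds the source $g^\alpha_{s,p,R}$ by some $M$ independent of $s$, so comparing $v^\sigma$ with the constants $\pm M/\sigma$ gives $\|\sigma v^\sigma\|_{L^\infty(Y)}\le M$. As the operator in \eqref{intro appr cor} is uniformly elliptic, convex in the Hessian and has coefficients Lipschitz in $y$, interior Krylov--Safonov estimates together with $Y$-periodicity bound $\mathrm{osc}_Y v^\sigma$ uniformly in $\sigma$, and Evans--Krylov together with Schauder theory bound $v^\sigma-\int_Y v^\sigma$ in $C^{2,\tilde\alpha}(\R^n)$ uniformly in $\sigma$. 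Passing to the limit $\sigma\searrow 0$ along a subsequence then gives $\sigma v^\sigma\to -H(s,p,R)$ uniformly for some constant and $v^\sigma-\int_Y v^\sigma\to v$ in $C^2_{\mathrm{loc}}(\R^n)$, with $(H(s,p,R),v)$ a classical---hence viscosity---solution of \eqref{corr v} and $v\in C^{2,\tilde\alpha}(\R^n)$. Uniqueness of the constant $H(s,p,R)$ (so that the definition makes sense and is subsequence-independent), its monotonicity in $R$ (non-increasing, as $R$ enters \eqref{corr v} only through $-A^\alpha_s(y):R$ with $A^\alpha_s(y)\ge 0$), and its Lipschitz continuity in $(s,p,R)$ all follow from the standard comparison arguments for such ergodic problems, using the Lipschitz dependence of the coefficients on $x$; this part is essentially the content of \cite{AB01,AB10,CM09}. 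With these properties in hand, $F(x,r,p,X):=r+H(x,p,X)$ is continuous, strictly increasing in $r$, degenerate elliptic and satisfies the structural conditions for a comparison principle, so \eqref{hom} has a unique viscosity solution $u_0\in C(\bar\Omega)$ with $u_0|_{\partial\Omega}=0$ (existence by Perron's method together with barriers at $\partial\Omega$, available since $\Omega$ is convex). The boundedness of $H$ also yields, via the maximum principle of Remark \ref{r:regularity}, a bound on $\|u_\eps\|_{L^\infty(\Omega)}$ uniform in $\eps$, together with equicontinuity of $u_\eps$ near $\partial\Omega$.

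For part (c), I would set $u^*(x):=\limsup_{\eps\to 0,\,x'\to x}u_\eps(x')$ and $u_*(x):=\liminf_{\eps\to 0,\,x'\to x}u_\eps(x')$; by the uniform bound these are bounded, respectively upper and lower semicontinuous, with $u_*\le u^*$ and $u^*=u_*=0$ on $\partial\Omega$. It then suffices to show that $u^*$ is a viscosity subsolution of \eqref{hom} and, symmetrically, $u_*$ a supersolution, since the comparison principle will then give $u^*\le u_0\le u_*$, whence $u^*=u_*=u_0$---which is exactly uniform convergence of $u_\eps$ to $u_0$ (locally uniform in $\Omega$, extended to $\bar\Omega$ via the boundary equicontinuity). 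To prove the subsolution property, let $\phi\in C^2$ be such that $u^*-\phi$ attains a strict local maximum at $x_0\in\Omega$; set $p_0:=\nabla\phi(x_0)$, $R_0:=D^2\phi(x_0)$, take the corrector $v\in C^2$ for $(x_0,p_0,R_0)$ from (a), and form the perturbed test function $\phi_\eps(x):=\phi(x)+\eps^2 v(x/\eps)$. Since $\eps^2 v(\cdot/\eps)\to 0$ uniformly, $u_\eps-\phi_\eps$ attains a local maximum at points $x_\eps\to x_0$; because $u_\eps$ is a viscosity subsolution of \eqref{equ} (Remark \ref{r:regularity}), testing with $\phi_\eps$ at $x_\eps$ and writing $y_\eps:=x_\eps/\eps$ yields
\begin{align*}
0 \ \ge\ & u_\eps(x_\eps) + \sup_{\alpha\in\Lambda}\Big\{ -A^\alpha(x_\eps,y_\eps):\big(D^2\phi(x_\eps)+D_y^2 v(y_\eps)\big)\\
&\qquad\qquad -\,b^\alpha(x_\eps,y_\eps)\cdot\big(\nabla\phi(x_\eps)+\eps\,\nabla_y v(y_\eps)\big) - f^\alpha(x_\eps,y_\eps) \Big\}.
\end{align*}
Using $x_\eps\to x_0$ and the Lipschitz/uniform continuity of $A^\alpha,b^\alpha,f^\alpha$ in $x$ (uniformly in $\alpha$ and $y$) to replace $A^\alpha(x_\eps,y_\eps)$ by $A^\alpha_{x_0}(y_\eps)$ and likewise for $b^\alpha,f^\alpha$ up to an error $o(1)$, together with $\eps\nabla_y v(y_\eps)\to 0$, $D^2\phi(x_\eps)\to R_0$, $\nabla\phi(x_\eps)\to p_0$, and recognising $-A^\alpha_{x_0}(y):D_y^2 v(y)-g^\alpha_{x_0,p_0,R_0}(y)$ inside the supremum, the cell equation \eqref{corr v} evaluated at $y_\eps$ identifies the bracket with $H(x_0,p_0,R_0)+o(1)$; passing to the limit $\eps\to 0$ gives $u^*(x_0)+H(x_0,p_0,R_0)\le 0$, as required. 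The supersolution property of $u_*$ is obtained in the same way, with $-v$ and a local minimum replacing $v$ and a local maximum.

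The step I expect to be the main obstacle is part (c): one must justify rigorously that the maxima of $u_\eps-\phi_\eps$ localise near $x_0$ and that the limit passage above survives the two-scale oscillation, in particular controlling the error incurred by freezing the slow variable $x$ at $x_0$ inside the coefficients---this is exactly where assumption (iii) (Lipschitz in $x$) is used. A secondary technical point is that $\phi_\eps$ must be an admissible ($C^2$) test function, i.e., the corrector $v$ must be $C^2$; this is handled by the Evans--Krylov/Schauder observation from (a) (uniformly elliptic, convex in the Hessian, Lipschitz-in-$y$ coefficients give $v\in C^{2,\tilde\alpha}_{\mathrm{loc}}(\R^n)$), which lets one use the classical rather than the doubling-of-variables form of the argument. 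Since all these ingredients are contained in \cite{CSW05,Eva89,Eva92} and \cite{AB01,AB10,CM09}, I would content myself with this sketch.
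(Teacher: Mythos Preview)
The paper does not actually prove this theorem: it is stated as a known result from the homogenization literature, with the sentence ``With this observation in hand, we can use the well-known homogenization results for viscosity solutions; see \cite{CSW05,Eva89,Eva92}'' serving in lieu of a proof, and the surrounding remarks (Remark~\ref{Rmrk reg approx corr}, Lemma~\ref{Lmm: prop of H}) likewise deferring to \cite{AB01,AB10,CM09}. Your sketch is therefore not competing against a proof in the paper but rather filling in what the paper leaves to citation; the route you outline---approximate correctors, Evans--Krylov regularity to get $C^2$ correctors, Evans' perturbed test function argument with half-relaxed limits---is precisely the content of those references and is correct in outline. One small remark: in your final display the inequality should read $0\ge u_\eps(x_\eps)+\sup_\alpha\{\cdots\}$ only after moving $u_\eps$ to the supremum side (the viscosity subsolution inequality for \eqref{equ} is $\sup_\alpha\{-A^\alpha:D^2\phi_\eps-b^\alpha\cdot\nabla\phi_\eps+u_\eps-f^\alpha\}\le 0$), but this is exactly what you wrote, so the argument goes through.
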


Let us note that rates for the convergence of $u_{\eps}$ to the homogenized solution $u_0$ have been derived for the whole space problem in \cite{CM09}.

The effective Hamiltonian can also be obtained through a limit of ergodic approximations, the so-called approximate correctors; see \cite{AB10} and the references therein. For $(s,p,R)\in \bar{\Omega}\times \R^n\times \calS^{n\times n}$ and $\sigma>0$, the approximate corrector $v^{\sigma}=v^{\sigma}(\cdot\,;s,p,R)\in C(\R^n)$ is defined to be the viscosity solution to
\begin{align}\label{ergodic alternative}
\sigma v^{\sigma}+\sup_{\alpha\in\Lambda} \left\{-A_s^{\alpha}:D^2 v^{\sigma} -g^{\alpha}_{s,p,R}\right\} = 0 \quad \text{in }\R^n,\qquad v^{\sigma}(\cdot\,;s,p,R) \text{ is $Y$-periodic}.
\end{align}

\begin{rmrk}[Regularity of approximate correctors]\label{Rmrk reg approx corr}
The viscosity solution $v^{\sigma}=v^{\sigma}(\cdot\,;s,p,R)\in C(\R^n)$ to \eqref{ergodic alternative} is in fact a classical solution $v^{\sigma}(\cdot\,;s,p,R)\in C^2(\R^n)$. Further, there exists $\tilde{\alpha}\in (0,1)$ such that
\begin{align*}
\left\| \sigma v^{\sigma}(\cdot\,;s,p,R)\right\|_{C(\R^n)}+\left\| v^{\sigma}(\cdot\,;s,p,R)-v^{\sigma}(0\,;s,p,R) \right\|_{C^{2,\tilde{\alpha}}(\R^n)}\lesssim 1+\lvert p\rvert +\lvert R\rvert
\end{align*}
for all $(s,p,R)\in \bar{\Omega}\times \R^n\times \calS^{n\times n}$; see \cite{AB01,CM09}.
\end{rmrk}

The value $H(s,p,R)\in \R$ for the effective Hamiltonian at the point $(s,p,R)$ is then the uniform limit of the sequence $\left\{-\sigma v^{\sigma}\right\}_{\sigma>0}$ as $\sigma\rightarrow 0$; see \cite{CM09}.

\begin{lmm}[Properties of the effective Hamiltonian]\label{Lmm: prop of H}
The following statements hold true.
\begin{itemize}
\item[(i)] The sequence $\left\{-\sigma v^{\sigma}(\cdot\,;s,p,R)\right\}_{\sigma>0}$ converges uniformly to the constant value $H(s,p,R)$ with
\begin{align*}
\left\| -\sigma v^{\sigma}(\cdot\,;s,p,R)-H(s,p,R)\right\|_{\infty} \lesssim \sigma \left(1+\lvert p\rvert +\lvert R\rvert \right)
\end{align*}
for all $(s,p,R)\in \bar{\Omega}\times \R^n\times \calS^{n\times n}$.
\item[(ii)] The effective Hamiltonian $H=H(s,p,R)$ is uniformly elliptic, it is convex in $R$, and we have
\begin{align*}
\left\lvert H(s,p_1,R_1)-H(s,p_2,R_2)\right\rvert &\lesssim \lvert p_1-p_2\rvert+\lvert R_1-R_2\rvert,\\
\left\lvert H(s_1,p,R)-H(s_2,p,R)\right\rvert &\lesssim \lvert s_1-s_2\rvert \left(1+\lvert p\rvert+\lvert R\rvert\right),
\end{align*}
for any $s,s_1,s_2\in\bar{\Omega}$, $p,p_1,p_2\in\R^n$ and $R,R_1,R_2\in \calS^{n\times n}$.
\end{itemize}
\end{lmm}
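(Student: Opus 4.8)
The plan is to prove all the stated inequalities first at the level of the approximate correctors $v^\sigma=v^\sigma(\cdot\,;s,p,R)$ solving \eqref{ergodic alternative}, and then to pass to the limit $\sigma\to 0$ using part (i). Fix $(s,p,R)$ and abbreviate $F_s^{p,R}[\phi]:=\sup_{\alpha\in\Lambda}\{-A^\alpha(s,\cdot):D^2\phi-g^\alpha_{s,p,R}\}$, so that \eqref{ergodic alternative} reads $\sigma v^\sigma+F_s^{p,R}[v^\sigma]=0$ and \eqref{corr v} reads $F_s^{p,R}[v]=H(s,p,R)$ for a periodic corrector $v$. The workhorse throughout is the comparison principle for the proper uniformly elliptic equation $\sigma w+F_s^{p,R}[w]=0$ on $Y$-periodic functions (valid for viscosity sub- and supersolutions; here, since all functions being compared are classical $C^2$ by Remark \ref{Rmrk reg approx corr}, it reduces to the maximum principle applied at an interior extremum of the difference, using $\sigma>0$ and positivity of the matrices $A^\alpha(s,\cdot)$; see also \cite{AB01,CM09}). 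For part (i) I would use the corrector $v$, which, as in \cite{AB01,CM09} (being a locally uniform $C^2$-limit of $v^\sigma-v^\sigma(0)$ along $\sigma\to 0$), may be taken in $C^2(\R^n)$ with $v(0)=0$ and, by Remark \ref{Rmrk reg approx corr}, $\|v\|_{C(\R^n)}\leq\|v\|_{C^{2,\tilde{\alpha}}(\R^n)}\lesssim 1+|p|+|R|$. Setting $\phi:=v-\sigma^{-1}H(s,p,R)$ and noting $D^2\phi=D^2v$, one gets $\sigma\phi+F_s^{p,R}[\phi]=\sigma v$ with $\|\sigma v\|_{C(\R^n)}\leq\sigma\|v\|_{C(\R^n)}\lesssim\sigma(1+|p|+|R|)$; hence $\phi\pm\|v\|_{C(\R^n)}$ are a sub- and a supersolution of $\sigma w+F_s^{p,R}[w]=0$, and comparison gives $\|v^\sigma-\phi\|_{C(\R^n)}\leq\sigma^{-1}\|\sigma v\|_{C(\R^n)}$. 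Multiplying by $\sigma$, substituting $\sigma\phi=\sigma v-H(s,p,R)$, and using the triangle inequality yields $\|-\sigma v^\sigma-H(s,p,R)\|_\infty\leq 2\|\sigma v\|_{C(\R^n)}\lesssim\sigma(1+|p|+|R|)$; uniform convergence and the identification of the limit with $H$ being already available (see \cite{CM09}), this gives (i).

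For part (ii), the key structural observation is that $(p,R)$ enters \eqref{ergodic alternative} only through $g^\alpha_{s,p,R}(y)=A^\alpha(s,y):R+b^\alpha(s,y)\cdot p+f^\alpha(s,y)$, which is affine in $(p,R)$, and that $M\mapsto\sup_{\alpha}\{-A^\alpha(s,y):M-g^\alpha_{s,p,R}(y)\}$ is convex. For \emph{uniform ellipticity}: if $M\geq 0$ then $\zeta_1\,\mathrm{tr}(M)\leq A^\alpha(s,y):M\leq\zeta_2\,\mathrm{tr}(M)$ by \eqref{unifm ell}, so $v^\sigma(\cdot\,;s,p,R)+\sigma^{-1}\zeta_1\,\mathrm{tr}(M)$ and $v^\sigma(\cdot\,;s,p,R)+\sigma^{-1}\zeta_2\,\mathrm{tr}(M)$ are a sub- and a supersolution of \eqref{ergodic alternative} with data $(s,p,R+M)$; comparison, multiplication by $-\sigma$, and $\sigma\to 0$ give $\zeta_1\,\mathrm{tr}(M)\leq H(s,p,R)-H(s,p,R+M)\leq\zeta_2\,\mathrm{tr}(M)$. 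For \emph{convexity in $R$}: for $R=\theta R_1+(1-\theta)R_2$, convexity of the supremum together with linearity of $g^\alpha$ in $R$ shows that $\theta v^\sigma(\cdot\,;s,p,R_1)+(1-\theta)v^\sigma(\cdot\,;s,p,R_2)$ is a subsolution of \eqref{ergodic alternative} with data $(s,p,R)$, hence $\leq v^\sigma(\cdot\,;s,p,R)$; multiplying by $-\sigma$ and letting $\sigma\to 0$ gives $H(s,p,R)\leq\theta H(s,p,R_1)+(1-\theta)H(s,p,R_2)$. For \emph{Lipschitz continuity in $(p,R)$}: since $A,b$ are bounded (continuity on a compact set plus periodicity), $|g^\alpha_{s,p_1,R_1}-g^\alpha_{s,p_2,R_2}|\lesssim|p_1-p_2|+|R_1-R_2|$ uniformly in $y$ and $\alpha$, so $v^\sigma(\cdot\,;s,p_1,R_1)$ shifted by $\pm C\sigma^{-1}(|p_1-p_2|+|R_1-R_2|)$ sandwiches $v^\sigma(\cdot\,;s,p_2,R_2)$, and sending $\sigma\to 0$ gives the claim.

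The \emph{main obstacle} is the Lipschitz estimate in $s$, since $s$ enters also the leading-order coefficient $A^\alpha(s,\cdot)$. Here I would use assumption (iii), $|A^\alpha(s_1,y)-A^\alpha(s_2,y)|+|b^\alpha(s_1,y)-b^\alpha(s_2,y)|+|f^\alpha(s_1,y)-f^\alpha(s_2,y)|\leq C|s_1-s_2|$ uniformly in $y$ and $\alpha$, together with the $(p,R)$-explicit a priori bound $\|D^2v^\sigma(\cdot\,;s_1,p,R)\|_{C(\R^n)}\lesssim 1+|p|+|R|$ from Remark \ref{Rmrk reg approx corr}. Estimating the difference $F_{s_2}^{p,R}[v^\sigma(\cdot\,;s_1,p,R)]-F_{s_1}^{p,R}[v^\sigma(\cdot\,;s_1,p,R)]$ by the supremum over $\alpha$ of the differences of the coefficients, and bounding $[A^\alpha(s_2,y)-A^\alpha(s_1,y)]:D^2v^\sigma(\cdot\,;s_1,p,R)$ by $C|s_1-s_2|(1+|p|+|R|)$ via that a priori bound, one obtains $\|\sigma v^\sigma(\cdot\,;s_1,p,R)+F_{s_2}^{p,R}[v^\sigma(\cdot\,;s_1,p,R)]\|_{C(\R^n)}\leq C|s_1-s_2|(1+|p|+|R|)$; then $v^\sigma(\cdot\,;s_1,p,R)$ shifted by $\pm C\sigma^{-1}|s_1-s_2|(1+|p|+|R|)$ sandwiches $v^\sigma(\cdot\,;s_2,p,R)$, and $\sigma\to 0$ yields $|H(s_1,p,R)-H(s_2,p,R)|\lesssim|s_1-s_2|(1+|p|+|R|)$. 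The delicate point, common to all parts, is that the implied constants — above all the one in Remark \ref{Rmrk reg approx corr} — must be uniform in $s\in\bar{\Omega}$ and in $\sigma>0$, which is where the uniform continuity/Lipschitz assumptions (i)--(iii), the ellipticity (iv), and the Cordes condition \eqref{Cordes} are used.
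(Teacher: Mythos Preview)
The paper does not actually prove this lemma: it is stated as a collection of known results with a citation to \cite{CM09} (see also \cite{AB01,AB10}) immediately preceding the statement, and no proof appears in Section~\ref{Sec Coll of Pfs}. Your proposal therefore goes well beyond what the paper does, supplying a self-contained argument where the paper simply defers to the literature.

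That said, your argument is essentially correct and follows the standard route taken in those references: the comparison principle for the proper operator $w\mapsto\sigma w+F_s^{p,R}[w]$ on periodic functions, combined with the $\sigma$-uniform $C^{2,\tilde\alpha}$ bound from Remark~\ref{Rmrk reg approx corr}, yields all the claimed estimates by constructing explicit shifted sub- and supersolutions. The treatment of the $s$-Lipschitz estimate---which you correctly identify as the most delicate point because $s$ enters the principal part---is handled in the right way, using the uniform Hessian bound on $v^\sigma$ to absorb the perturbation of $A^\alpha(s,\cdot)$ into the right-hand side. One small remark: in part~(i) you rely on the existence of a $C^2$ corrector $v$ with $\|v\|_{C}\lesssim 1+|p|+|R|$, which is obtained by passing to the limit in $v^\sigma-v^\sigma(0)$ along a subsequence via Remark~\ref{Rmrk reg approx corr}; this is fine, but note that it makes part~(i) depend on that compactness step rather than being a direct consequence of the approximate-corrector bounds alone. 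An alternative (also standard) is to compare $v^\sigma$ and $v^{\sigma'}$ directly for two values $\sigma,\sigma'$ and show $\{-\sigma v^\sigma\}$ is Cauchy, but your route is equally valid.
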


Note that the properties of the approximate correctors from Remark \ref{Rmrk reg approx corr} and Lemma \ref{Lmm: prop of H} (i) allow passage to the limit $\sigma\rightarrow 0$ in \eqref{ergodic alternative} and guarantee the existence of a corrector $v(\cdot\,;s,p,R)\in C^2(\R^n)$ (i.e., a classical solution to \eqref{corr v}). We also note that the properties of the effective Hamiltonian from Lemma \ref{Lmm: prop of H} (ii) yield a regularity result for the homogenized solution as it is of the type of problems studied in \cite{Saf88}.

\begin{rmrk}[Regularity of the homogenized solution]\label{r:regularity hom sol}
The viscosity solution $u_0\in C(\bar{\Omega})$ to the homogenized problem \eqref{hom} satisfies
\begin{align*}
u_0\in C^{2,\tilde{\alpha}}(\Omega)\cap C(\bar{\Omega})
\end{align*}
for some $\tilde{\alpha}>0$. Further, if $\partial\Omega\in C^{2,\beta}$ for some $\beta>0$, then $u_0\in C^{2,\tilde{\alpha}}(\bar{\Omega})$ for some $\tilde{\alpha}>0$.
\end{rmrk}

\subsection{Approximation of the approximate corrector}\label{subsec mixed fem for appr corr}

We construct a mixed finite element method for the numerical approximation of the approximate corrector for fixed $(s,p,R)\in \bar{\Omega}\times \R^n\times \calS^{n\times n}$. For $\sigma\in (0,1)$ we consider the problem \eqref{ergodic alternative}, i.e., the problem of finding a strong solution $v^{\sigma}=v^{\sigma}(\cdot\,;s,p,R)$ to
\begin{align}\label{v sigma prob}
\sup_{\alpha\in\Lambda} \left\{-A_s^{\alpha}:D^2 v^{\sigma}+\sigma v^{\sigma} -g^{\alpha}_{s,p,R}\right\} = 0 \quad \text{in }\R^n,\qquad v^{\sigma}(\cdot\,;s,p,R)\in H^2_{\mathrm{per}}(Y).
\end{align}
Recall the notation $A_s^{\alpha}(y):=A_s(y,\alpha):=A(s,y,\alpha)$ and
\begin{align*}
g_{s,p,R}^{\alpha}(y):=g_{s,p,R}(y,\alpha):=A^{\alpha}(s,y): R+ b^{\alpha}(s,y)\cdot p +f^{\alpha}(s,y)
\end{align*}
for $y\in\R^n$ and $\alpha\in\Lambda$ from Theorem \ref{thm Hom HJB}.

Note that $g_{s,p,R}:\R^n\times \Lambda\rightarrow \R$ is continuous, and that $g^{\alpha}_{s,p,R}$ is $Y$-periodic for fixed $\alpha\in \Lambda$ and Lipschitz on $\R^n$ uniformly in $\alpha$. We also note that we have the Cordes condition \eqref{Cordes}, which yields
\begin{align}\label{Cordes lambda sigma}
\lvert A_s\rvert^2+ \frac{\sigma^2}{\lambda_{\sigma}^2}\leq \frac{1}{n+\delta}\left(\mathrm{tr}( A_s) + \frac{\sigma}{\lambda_{\sigma}}  \right)^2   \qquad\text{in }\R^n\times \Lambda,
\end{align}
where $\lambda_{\sigma}>0$ is given by
\begin{align*}
\lambda_{\sigma}:= \sigma\lambda.
\end{align*}
The corresponding scaling function $\gamma^{\alpha}(y):=\gamma(y,\alpha)$ is defined by (compare with \eqref{gamma first def})
\begin{align*}
\gamma:= \left(\lvert A_s\rvert^2+ \frac{\sigma^2}{\lambda_{\sigma}^2}  \right)^{-1} \left(\mathrm{tr}( A_s) + \frac{\sigma}{\lambda_{\sigma}}\right).
\end{align*}

Observe that \eqref{Cordes lambda sigma} is the Cordes condition \eqref{Cordes periodic} for the problem \eqref{v sigma prob} with Cordes constants $\delta$ and $\lambda_{\sigma}$. Therefore, Theorem \ref{thm well-pos} ensures the well-posedness of the problem \eqref{v sigma prob}, i.e., existence and uniqueness of a strong $Y$-periodic solution. We apply the mixed finite element method from Section \ref{subsec the discrete} to problem \eqref{v sigma prob} to obtain
an approximation.

The scheme from Section \ref{Section FE schemes for periodic HJB} applied to the problem \eqref{v sigma prob} yields an approximation $(m_h^{\sigma},(w_h^{\sigma},v_h^{\sigma}))\in M_h\times X_h$ with error bound
\begin{align}\label{appr mix}
\vertiii{(\nabla v^{\sigma}-w_h^{\sigma},v^{\sigma}-v_h^{\sigma})}_{\lambda_{\sigma}}\leq C_e(\delta,\lambda_{\sigma},n) \inf_{(w'_h,u'_h)\in X_h}\vertiii{(\nabla v^{\sigma}-w'_h, v^{\sigma}-u'_h)}_{\lambda_{\sigma}},
\end{align}
and we have that $C_e(\delta,\lambda_{\sigma},n)\leq C_e(\delta,\lambda,n)$ for all $\sigma\in (0,1)$.  For a shape-regular triangulation $\calT_h$ on $Y$, denoting the Lagrange finite element space
of degree $q\in\N$ over the triangulation by $\calS^q(\calT_h)$, we obtain the following approximation result:

\begin{thrm}[Error bound for the approximate corrector]\label{thrm error bd mixed}
For $\sigma\in (0,1)$, if we have $v^{\sigma}=v^{\sigma}(\cdot\,;s,p,R)\in H^{2+r}(Y)$ for some $r\geq 0$ and the choice
\begin{align*}
X_h:= \left(\calS^q(\calT_h;\R^n)\cap W_{\mathrm{per}}(Y;\R^n) \right)\times \left(\calS^l(\calT_h)\cap H^1_{\mathrm{per}}(Y)\right)
\end{align*}
for some $q,l\in \N$ and a shape-regular triangulation $\calT_h$ on $Y$ (consistent with the requirement of periodicity), we find that
\begin{align*}
\vertiii{(\nabla v^{\sigma}-w_h^{\sigma},v^{\sigma}-v_h^{\sigma})}_{\lambda_{\sigma}}\leq C h^{\min\{r,q,l\}} \|\nabla v^{\sigma}\|_{H^{1+r}(Y)}
\end{align*}
for $h>0$ sufficiently small, with the constant $C>0$ only depending on $\delta,\lambda,n$ and interpolation constants.
\end{thrm}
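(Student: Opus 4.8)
The plan is to combine the abstract a priori error bound from Theorem~\ref{Thm error bd} (applied with Cordes parameters $\delta$ and $\lambda_\sigma=\sigma\lambda$) with a standard finite element interpolation estimate. First I would invoke \eqref{appr mix}, which gives
\begin{align*}
\vertiii{(\nabla v^{\sigma}-w_h^{\sigma},v^{\sigma}-v_h^{\sigma})}_{\lambda_{\sigma}}\leq C_e(\delta,\lambda_{\sigma},n)\inf_{(w'_h,u'_h)\in X_h}\vertiii{(\nabla v^{\sigma}-w'_h,v^{\sigma}-u'_h)}_{\lambda_{\sigma}},
\end{align*}
and then bound the right-hand side by choosing $(w'_h,u'_h)$ to be suitable interpolants. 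Since $X_h=(\calS^q(\calT_h;\R^n)\cap W_{\mathrm{per}}(Y;\R^n))\times(\calS^l(\calT_h)\cap H^1_{\mathrm{per}}(Y))$ is a product of Lagrange spaces that respect periodicity, I would take $w'_h$ to be a (vector-valued, periodicity-preserving, mean-value-corrected) Lagrange interpolant of $w^\sigma:=\nabla v^\sigma$ and $u'_h$ a Lagrange interpolant of $v^\sigma$. One subtlety to mention: the interpolant of $\nabla v^\sigma$ should be adjusted to have zero mean so that it lies in $W_{\mathrm{per}}(Y;\R^n)$; since $\int_Y \nabla v^\sigma=0$ by periodicity, subtracting the (small) mean of the raw interpolant costs only a higher-order term, controlled again by the interpolation error via the Poincar\'e inequality \eqref{Poincare explicit}.

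Next I would expand the triple norm: by definition,
\begin{align*}
\vertiii{(\nabla v^{\sigma}-w'_h,v^{\sigma}-u'_h)}_{\lambda_{\sigma}}^2=\|D(\nabla v^\sigma-w'_h)\|_{L^2(Y)}^2+2\lambda_\sigma\|\nabla(v^\sigma-u'_h)\|_{L^2(Y)}^2+\lambda_\sigma^2\|v^\sigma-u'_h\|_{L^2(Y)}^2.
\end{align*}
Standard polynomial interpolation estimates on a shape-regular mesh give $\|D(\nabla v^\sigma-w'_h)\|_{L^2(Y)}\lesssim h^{\min\{r,q\}}\|\nabla v^\sigma\|_{H^{1+r}(Y)}$ (noting $w^\sigma\in H^{1+r}(Y;\R^n)$ because $v^\sigma\in H^{2+r}(Y)$), while the two lower-order terms involving $v^\sigma-u'_h$ are bounded by $h^{\min\{r+1,l+1\}}\|v^\sigma\|_{H^{2+r}(Y)}$ and $h^{\min\{r+2,l+1\}}\|v^\sigma\|_{H^{2+r}(Y)}$ respectively (again after adjusting $u'_h$ for the mean value if desired, though $H^1_{\mathrm{per}}$ imposes no zero-mean constraint so this is not needed there). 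Collecting exponents and using $\|v^\sigma\|_{H^{2+r}(Y)}\lesssim \|\nabla v^\sigma\|_{H^{1+r}(Y)}$ up to the constant part (again by Poincar\'e, since the constant part of $v^\sigma$ is irrelevant modulo the additive normalization), the dominant power of $h$ is $h^{\min\{r,q,l\}}$, and one factor of $\lambda_\sigma$ or $\lambda_\sigma^2$ only improves the lower-order terms.

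Finally I would absorb all mesh-independent quantities into a single constant. The crucial point, already recorded in Remark~\ref{rk explicit Ce} and in the line after \eqref{appr mix}, is that $C_e(\delta,\lambda_\sigma,n)\le C_e(\delta,\lambda,n)$ for all $\sigma\in(0,1)$ because $C_e$ is monotonically increasing in its second argument and $\lambda_\sigma=\sigma\lambda\le\lambda$; likewise the factors $\sqrt{\lambda_\sigma}\le\sqrt\lambda$ multiplying the lower-order interpolation terms are uniformly bounded. Hence the constant $C$ in the statement depends only on $\delta,\lambda,n$ and the interpolation constants (which depend on the shape-regularity of $\calT_h$ and on $q,l$), and not on $\sigma$. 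I do not anticipate a genuine obstacle here; the one point that needs care — and which I would state explicitly rather than gloss over — is the uniformity in $\sigma$, i.e. making sure every $\lambda_\sigma$-dependent constant is controlled by its $\lambda$-counterpart, which is exactly what the monotonicity of $C_e$ and the bound $\lambda_\sigma\le\lambda$ deliver. The phrase ``for $h>0$ sufficiently small'' accommodates the mean-value correction of the vector interpolant and any resolution requirement needed for the interpolation estimates to hold in the stated form.
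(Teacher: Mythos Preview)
Your proposal is correct and follows essentially the same approach as the paper: invoke \eqref{appr mix}, insert mean-corrected Lagrange interpolants, apply standard interpolation estimates term-by-term in the $\vertiii{\cdot}_{\lambda_\sigma}$ norm, and conclude via the monotonicity of $C_e$ in $\lambda$ together with $\lambda_\sigma\le\lambda$. One small imprecision: the $H^1$-interpolation rate for the scalar term should be $h^{\min\{r+1,l\}}$ rather than $h^{\min\{r+1,l+1\}}$, but this does not affect the final bound since either exponent dominates $\min\{r,q,l\}$.
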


\begin{rmrk}
The proof yields that the error constant can be taken to be
\begin{align*}
C:=C_e(\delta,\lambda,n)C_i(1+\lambda),
\end{align*}
where $C_i$ is a constant arising from interpolation inequalities.
\end{rmrk}

\subsection{Approximation of the effective Hamiltonian}\label{subsec app of eff ham}

The approximation of the approximate corrector from the previous section allows us to obtain an approximation to the effective Hamiltonian as follows.

First, we note that with $\tilde{\alpha}\in (0,1)$ from Remark \ref{Rmrk reg approx corr} we have that, for any $r\in [0,\tilde{\alpha})$, there holds
\begin{align*}
\sup_{\sigma\in (0,1)}\| \nabla v^{\sigma}(\cdot\,;s,p,R)\|_{H^{1+r}(Y)}\lesssim \sup_{\sigma\in (0,1)}\|\nabla v^{\sigma}(\cdot\,;s,p,R)\|_{C^{1,\tilde{\alpha}}(\R^n)}\lesssim 1+\lvert p\rvert +\lvert R\rvert,
\end{align*}
uniformly in $\sigma$. Using the error bound from Theorem \ref{thrm error bd mixed}, we deduce that
\begin{align*}
\vertiii{(\nabla v^{\sigma}-w_h^{\sigma},v^{\sigma}-v_h^{\sigma})}_{\lambda_{\sigma}}\lesssim h^{\min\{r,q,l\}} \|\nabla v^{\sigma}\|_{H^{1+r}(Y)}\lesssim h^{\min\{r,q,l\}}(1+\lvert p\rvert +\lvert R\rvert),
\end{align*}
with a constant independent of $\sigma$ and the choice of $(s,p,R)$. In particular, by the definition of $\vertiii{\cdot}_{\lambda_{\sigma}}$, we have
\begin{align}\label{helperror}
\|\sigma v^{\sigma}-\sigma v_h^{\sigma}\|_{L^2(Y)}\lesssim h^{\min\{r,q,l\}} (1+\lvert p\rvert +\lvert R\rvert).
\end{align}
We then define the approximated effective Hamiltonian as
\begin{align}\label{appreffHam}
H_{\sigma,h}:\bar{\Omega}\times \R^n\times \calS^{n\times n}\rightarrow\R,\qquad  H_{\sigma,h}(s,p,R):= -\sigma \int_Y v^{\sigma}_{h}(\cdot\,;s,p,R).
\end{align}
Then, the following approximation result holds.

\begin{thrm}[Approximation of the effective Hamiltonian]\label{App of eff H}
Let $\sigma\in (0,1)$ and $(w^{\sigma}_h,v^{\sigma}_h)\in X_h$ as in Theorem \ref{thrm error bd mixed}. Further let $H_{\sigma,h}$ be defined as in \eqref{appreffHam}. Then, for $(s,p,R)\in \bar{\Omega}\times \R^n\times \calS^{n\times n}$, we have the error bound
\begin{align*}
\left\lvert H_{\sigma,h}(s,p,R)- H(s,p,R)\right\rvert \lesssim \left(h^r + \sigma\right)(1+\lvert p\rvert +\lvert R\rvert)
\end{align*}
for any $r\in [0,\tilde{\alpha})$ with $\tilde{\alpha}\in (0,1)$ from Remark \ref{Rmrk reg approx corr}. More generally, for fixed $(s,p,R)\in \bar{\Omega}\times \R^n\times \calS^{n\times n}$, we have
\begin{align*}
\left\lvert H_{\sigma,h}(s,p,R)- H(s,p,R)\right\rvert=\calO\left(h^{\min\{r,q,l\}} + \sigma\right)
\end{align*}
for any $r\geq 0$ such that $\{\|\nabla v^{\sigma}(\cdot\,;s,p,R)\|_{H^{1+r}(Y)}\}_{\sigma\in(0,1)}$ is uniformly bounded.
\end{thrm}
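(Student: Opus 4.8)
The plan is to compare the computable quantity $H_{\sigma,h}(s,p,R)=-\sigma\int_Y v_h^{\sigma}(\cdot\,;s,p,R)$ with $H(s,p,R)$ by inserting the exact approximate corrector $v^{\sigma}=v^{\sigma}(\cdot\,;s,p,R)$ as an intermediate object, so that the error splits into a finite element part, governed by \eqref{helperror} (which itself rests on Theorem \ref{thrm error bd mixed}), and an ergodic-approximation part, governed by Lemma \ref{Lmm: prop of H}(i). The observation that makes this clean is that $H(s,p,R)\in\R$ is a \emph{constant} and $|Y|=1$, so $H(s,p,R)=\int_Y H(s,p,R)$ and hence
\[
H_{\sigma,h}(s,p,R)-H(s,p,R)=\int_Y\bigl(-\sigma v_h^{\sigma}(\cdot\,;s,p,R)-H(s,p,R)\bigr).
\]

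First I would write $-\sigma v_h^{\sigma}-H(s,p,R)=\sigma(v^{\sigma}-v_h^{\sigma})+\bigl(-\sigma v^{\sigma}-H(s,p,R)\bigr)$, take absolute values, integrate over $Y$, and use the Cauchy--Schwarz inequality (again using $|Y|=1$ to pass from $L^1(Y)$ to $L^2(Y)$ resp. $L^{\infty}(Y)$) to get
\[
\bigl|H_{\sigma,h}(s,p,R)-H(s,p,R)\bigr|\le \|\sigma v^{\sigma}-\sigma v_h^{\sigma}\|_{L^2(Y)}+\|{-\sigma v^{\sigma}}-H(s,p,R)\|_{L^{\infty}(Y)}.
\]
The first term is exactly the quantity estimated in \eqref{helperror}, hence $\lesssim h^{\min\{r,q,l\}}(1+|p|+|R|)$; the second term is $\lesssim \sigma(1+|p|+|R|)$ directly by Lemma \ref{Lmm: prop of H}(i). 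Adding the two contributions yields the asserted bound.

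For the first (uniform) statement one restricts to $r\in[0,\tilde\alpha)$ and invokes the regularity recalled just before the theorem, namely $\sup_{\sigma\in(0,1)}\|\nabla v^{\sigma}(\cdot\,;s,p,R)\|_{H^{1+r}(Y)}\lesssim 1+|p|+|R|$ uniformly in $\sigma$ and in $(s,p,R)$ (a consequence of the $C^{2,\tilde\alpha}$-bound in Remark \ref{Rmrk reg approx corr}); since $q,l\in\N$ satisfy $q,l\ge 1>r$ one has $\min\{r,q,l\}=r$, giving the rate $h^r$. For the second statement one simply replaces this uniform Sobolev bound by the hypothesis that $\{\|\nabla v^{\sigma}(\cdot\,;s,p,R)\|_{H^{1+r}(Y)}\}_{\sigma\in(0,1)}$ is bounded for the fixed $(s,p,R)$ at hand, which is enough to run the same argument with an implicit constant now allowed to depend on $(s,p,R)$, keeping the general exponent $\min\{r,q,l\}$.

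The proof is essentially bookkeeping; the only point that needs care — the "main obstacle" — is to ensure that every implicit constant is uniform in $\sigma\in(0,1)$. Concretely this uses that $C_e(\delta,\lambda_{\sigma},n)\le C_e(\delta,\lambda,n)$ for $\sigma\in(0,1)$ (noted after \eqref{appr mix}, cf.\ Remark \ref{rk explicit Ce}), that the $H^{1+r}$-bound on $\nabla v^{\sigma}$ is $\sigma$-independent, and that the growth factor $1+|p|+|R|$ is tracked consistently through both \eqref{helperror} and Lemma \ref{Lmm: prop of H}(i); beyond this there is no deeper difficulty.
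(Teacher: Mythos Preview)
Your proof is correct and follows essentially the same approach as the paper: both split the error via the exact approximate corrector $v^{\sigma}$, apply H\"older/Cauchy--Schwarz on $Y$ (using $|Y|=1$), and then invoke \eqref{helperror} for the finite element part and Lemma~\ref{Lmm: prop of H}(i) for the ergodic-approximation part. Your additional remarks on why $\min\{r,q,l\}=r$ in the first statement and on the uniformity in $\sigma$ are helpful clarifications that the paper leaves implicit.
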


\section{Numerical Experiments}\label{Sec Num Exp}

\subsection{Set-up}

We consider the problem of approximating the solution $u_\varepsilon$ to the HJB equation
\begin{equation*}
 \left\{
 \begin{aligned}
  \sup_{\alpha\in\Lambda}
   \left\{
    -A^\alpha\left(\frac{\cdot}{\varepsilon}\right):D^2u_\varepsilon
    +u_\varepsilon -1
   \right\}
   &=0\quad&&\text{in }\Omega,
   \\
   u_\varepsilon&=0 &&\text{on }\partial\Omega,
 \end{aligned}
 \right.
\end{equation*}
where $\Omega:=(0,1)^2\subseteq\mathbb R^2$ is the unit square and $\Lambda := [0,1]$. The coefficient $A$ has the structure
\begin{equation*}
 A:\mathbb R^2\times\Lambda\to \mathcal S^{2\times 2},
 \qquad
 A(y,\alpha):=A^\alpha(y):=\left(a_0(y)+\alpha a_1(y)\right)B
\end{equation*}
for $Y$-periodic functions $a_0,a_1:\mathbb R^2\to(0,\infty)$ and a symmetric positive definite matrix $B\in\mathcal S^{2\times 2}$. The homogenized problem \eqref{hom} is then given by
\begin{align*}
\left\{ \begin{aligned}u_0 + H(D^2 u_0) &= 0 &\quad &\text{in }\Omega,\\ \hfill u_0&= 0 &\quad  &\text{on }\partial\Omega,\end{aligned}\right.
\end{align*}
and an explicit expression for the effective Hamiltonian, according to \cite[Section 2.2]{FO18},
is given by
\begin{equation}\label{e:FinlayOberman_formula}
 H:\mathcal S^{2\times2}\to\mathbb R,
 \qquad
 H(R)=\max\left\{
  -\left(\int_Y\frac{1}{a_0}\right)^{-1}B:R,
  -\left(\int_Y\frac{1}{a_0+a_1}\right)^{-1}B:R
 \right\}-1.
\end{equation}
Explicitly, we choose in our numerical experiments
\begin{equation*}
 B:=\begin{pmatrix}
    2&-1\\-1&4
   \end{pmatrix}
   ,\quad
   a_0\equiv1
   ,\quad
   a_1(y_1,y_2):=\sin^2(2\pi y_1)\cos^2(2\pi y_2) +1.
\end{equation*}

\subsection{First experiment: Approximation of the effective Hamiltonian at a point}

Our objective in the first numerical experiment is to investigate the approximation of the effective Hamiltonian $H(R)$ by the numerically computed approximate Hamiltonian $H_{\sigma,h}(R)$ at some given point $R\in\mathcal S^{2\times 2}$. We choose
\begin{align*}
R:=\begin{pmatrix}-2&1\\1&-3\end{pmatrix}
\end{align*}
as a negative definite matrix so that the maximum in \eqref{e:FinlayOberman_formula} is realized by the term involving the harmonic integral mean of $a_0+a_1$ (i.e., the term involving $\left[\int_Y (a_0+a_1)^{-1}\right]^{-1}$).
For our discretization, we choose a continuous piecewise affine discretization with $q=l=1$ and $M_h:=\{0\}$. 
In order to compare the experimental results with the theoretical bound of Theorem~\ref{App of eff H}, we consider convergence in $h$ and $\sigma$ separately. We test convergence with respect to $h$ by fixing a (sufficiently small)
value $\sigma=0.01$ and choosing a uniform mesh-refinement of the periodicity cell $Y=(0,1)^2$. Since the error bound for the approximate corrector from Theorem~\ref{thrm error bd mixed} is given in the norm $\vertiii{\cdot}_{\lambda_{\sigma}}$, we first numerically test the convergence rate predicted by Theorem~\ref{thrm error bd mixed}.
The exact approximate corrector $v^\sigma$ is unknown, and thus we instead compute the \textit{a posteriori} error estimator
$$
\eta(h):=
\left\|F_{\gamma}[(w_h,u_h)]\right\|_{L^2(Y)}^2
+ \sigma_1\left\|\mathrm{rot}(w_h)\right\|_{L^2(Y)}^2 
+\sigma_2 \left\|w_h-\nabla u_h\right\|_{L^2(Y)}^2,
$$
which is, up to a constant factor, equivalent to the error in Theorem~\ref{thrm error bd mixed}; see Theorem~\ref{thrm a posteriori} and Remark~\ref{rmrk local efficiency}. The convergence histories of $\eta/100$ and the relative error
$$
  \frac{|H_{\sigma,h}(R)-H(R)|}{|H(R)|}
$$
are displayed in Figure~\ref{f:convergence_corr_hrefinement}. As we are mainly interested in the rate of convergence, we plot $\eta/100$ so that both error quantities can be shown in the same diagram.

\begin{figure}[H]
\begin{tikzpicture}[scale=1.15]
  \begin{loglogaxis}
        [
            xlabel=$1/h$,
            ylabel style={rotate=0},
            ylabel={},
            xmin=1,xmax=1e2,
            ymin=1e-7,ymax=1e-1,
            legend style=
            {
                at={(1,.5)},
                anchor=west,
                draw=black,
                fill=none
            },
            cycle list name=black white,
            grid=major,
            clip=false,
            scaled ticks=true
        ]
      \addplot table[x index =0,y index=1]{./numresults/corr_approx/etaVSinvh.dat};
      \addplot table[x index =0,y index=1]{./numresults/corr_approx/relerrVSinvh.dat};
      \legend
            {   $\eta/100$,
                $\frac{|H_{\sigma,h}(R)-H(R)|}{|H(R)|}$
            }
    \end{loglogaxis}
  \end{tikzpicture}
  \caption{Error estimator and 
           approximation error between $H(R)$ by $H_{\sigma,h}(R)$ 
           under mesh refinement with fixed $\sigma=0.01$.}
  \label{f:convergence_corr_hrefinement}
\end{figure}
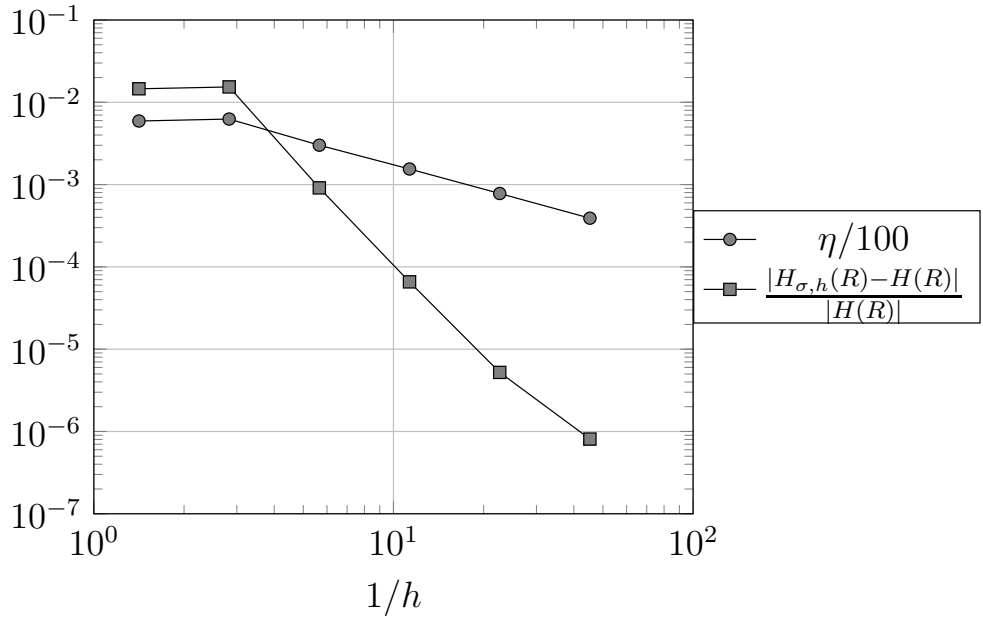

As expected from Theorem~\ref{thrm error bd mixed}, the error estimator is of order $h$, whereas we observe cubic convergence with respect to $h$ for the relative error of the effective Hamiltonian at the point $R$. This rate is higher than predicted by Theorem~\ref{App of eff H}, which is based on an error estimate in the norm $\vertiii{\cdot}_{\lambda_{\sigma}}$ and is therefore indeed expected to overestimate the actual error between $H_{\sigma,h}(R)$ and $H(R)$ related to the weaker integral functional from \eqref{appreffHam}.

Next, we test convergence with respect to $\sigma$ by fixing a fine mesh size $h=\sqrt{2}\times 2^{-7}$ and letting $\sigma$ vary from $2^4$ to $2^{-7}$. The convergence history of the relative error is displayed in Figure~\ref{f:convergence_corr_sigmarefinement}.

\begin{figure}[H]
\begin{tikzpicture}[scale=1.15]
  \begin{loglogaxis}
        [
            xlabel=$1/\sigma$,
            ylabel style={rotate=0},
            ylabel={relative error $\frac{|H_{\sigma,h}(R)-H(R)|}{|H(R)|}$},
            xmin=1e-2,xmax=1e3,
            ymin=1e-8,ymax=1e-3,
            cycle list name=black white,
            grid=major,
            clip=false,
            scaled ticks=true
        ]
      \addplot table[x index =0,y index=1]{./numresults/corr_approx/relerrVSinvsigma.dat};
    \end{loglogaxis}
  \end{tikzpicture}
\caption{Approximation of $H(R)$ by $H_{\sigma,h}(R)$ for varying $\sigma$ with fixed mesh size $h=\sqrt{2}\times 2^{-7}$.}
  \label{f:convergence_corr_sigmarefinement}
\end{figure}
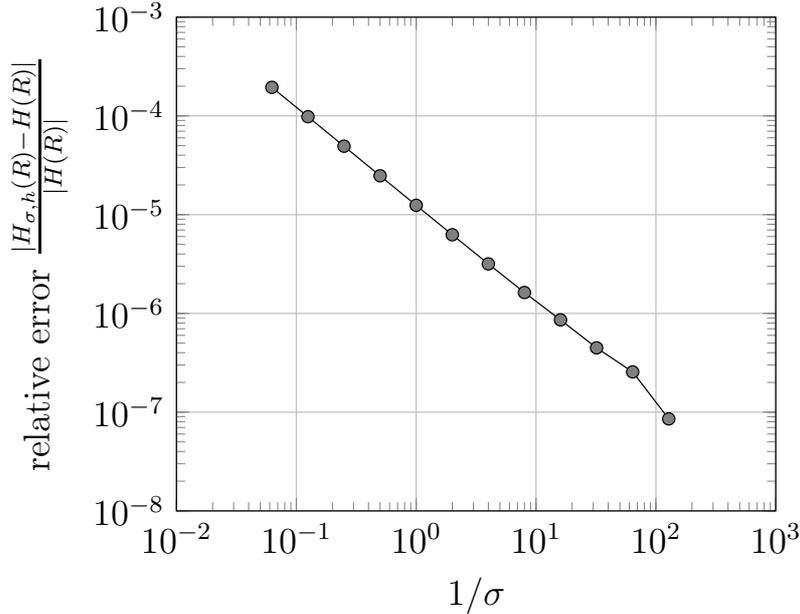

We observe linear convergence with respect to $\sigma$, which indicates that the bound in Theorem~\ref{App of eff H} is sharp in $\sigma$.

\subsection{Second experiment: Numerical approximation of the homogenized problem}

The second numerical experiment is devoted to the approximation of the effective problem \eqref{hom}. We first note that the discretization on the scales $\Omega$ and $Y$ leads to a two-scale approach. We denote the triangulation of $\Omega$ by $\mathcal T_h^\Omega$ with mesh size $h_\Omega$ and the triangulation of  $Y$ by $\mathcal T_h^Y$ with mesh size $h_Y$. In view of the regularity result from Remark~\ref{r:regularity hom sol}, we discretize the solution $u_0$ of this fully nonlinear equation by a least-squares approach, which is explained in the following. We discretize functions over $\Omega$ using the finite element space consisting of continuous piecewise affine functions
$$
 \mathcal S^1_0(\mathcal T_h^\Omega)
$$
satisfying a homogeneous Dirichlet boundary condition, and their gradients by vector-valued continuous piecewise affine finite elements
$$
 \mathcal S^1(\mathcal T_h^\Omega;\mathbb R^2).
$$
Given $w_h^\Omega \in \mathcal S^1(\mathcal T_h^\Omega;\mathbb R^2)$, we say that $D w_h^\Omega$ is the discrete Hessian of some $u_h^\Omega\in \mathcal S^1_0(\mathcal T_h^\Omega)$ if it satisfies
\begin{equation*}
\int_{\Omega} w_h^\Omega\cdot v =\int_{\Omega}\nabla u_h^\Omega\cdot v \qquad\forall v\in \mathcal S^1(\mathcal T_h^\Omega;\mathbb R^2)
\end{equation*}
and we write $D^2_h u_h^\Omega := D w_h^\Omega$. The discrete Hessian $D^2_h u_h^\Omega$ is expected to be discontinuous across the element boundaries. In order to define a function that represents the
evaluation of the discretized approximate Hamiltonian $H_{\sigma,h_Y}$ at $D^2_h u_h^\Omega$, we define the continuous and piecewise affine function $\tilde H_{\sigma,h_Y}(D^2u_h^\Omega)$ by nodal
averaging of the piecewise constant function
$$
 x \mapsto  H_{\sigma,h_Y}(\operatorname{mid}(T))
 \quad\text{for } T\in\mathcal T^\Omega_h \text{ with }x\in T
$$
(defined a.e.\ in $\Omega$) where $\operatorname{mid}(T)$ denotes the barycenter of $T$. We then define the numerical approximation $u_h^\Omega = u_h^\Omega(h_\Omega,\sigma,h_Y)$ as a minimizer of the following least-squares functional
$$
u_h^\Omega\in
\underset{v_h^\Omega\in \mathcal S^1_0(\mathcal T_h^\Omega)}{\operatorname{arg\,min}}
\| v_h^\Omega + \tilde H_{\sigma,h_Y}(D^2_h v_h^\Omega) \|_{L^2(\Omega)}^2 .
$$
In our implementation, we computed the minimizer by using Matlab's built-in function \texttt{fmincon}, without prescribing any derivative information because we are not aware of any (semi)smoothness properties of the solution operator. We choose $\sigma=0.1$ and $h_Y=\sqrt{2}\times 2^{-2}$ fixed and consider a sequence of uniformly refined triangulations of $\Omega$ with mesh sizes $h_\Omega \in \sqrt{2}\times 2^{-\{1,2,3,4\}}$. For the error computation, we use as reference solution the approximation of $u_\varepsilon$ with $\varepsilon=0.1$ on a triangulation with mesh-size $\sqrt{2}\times2^{-7}$. The convergence history of the errors in the $L^\infty$ and $L^2$ norms is displayed in Figure~\ref{f:convergence_effective_problem}.

\begin{figure}[H]
\begin{tikzpicture}[scale=1.15]
  \begin{loglogaxis}
        [
            xlabel=$1/h_\Omega$,
            ylabel style={rotate=0},
            ylabel={relative error},
            xmin=1e0,xmax=1e2,
            ymin=1e-2,ymax=1e1,
            legend style=
            {
                at={(1,.5)},
                anchor=west,
                draw=black,
                fill=none
            },
            cycle list name=black white,
            grid=major,
            clip=false,
            scaled ticks=true
        ]
      \addplot table[x index =0,y index=1]{./numresults/hjb_effective/relerrL2vshinv.dat};
      \addplot table[x index =0,y index=1]{./numresults/hjb_effective/relerrMAXvshinv.dat};
      \legend{
             $\frac{\|u_h^\Omega-u_\varepsilon\|_{L^\infty(\Omega)}}
                   {\|u_\varepsilon\|_{L^\infty(\Omega)}}$,
             $\frac{\|u_h^\Omega-u_\varepsilon\|_{L^2(\Omega)}}
                   {\|u_\varepsilon\|_{L^2(\Omega)}}$,
            }
    \end{loglogaxis}
  \end{tikzpicture}
\caption{Convergence history under mesh refinement of $\Omega$ for the approximation of the solution $u_0$ to the effective equation. The reference solution $u_\varepsilon$ is computed for $\varepsilon=0.1$. The cell problem is solved with $h_Y=\sqrt{2}\times 2^{-2}$ and $\sigma=0.1$.}
  \label{f:convergence_effective_problem}
\end{figure}
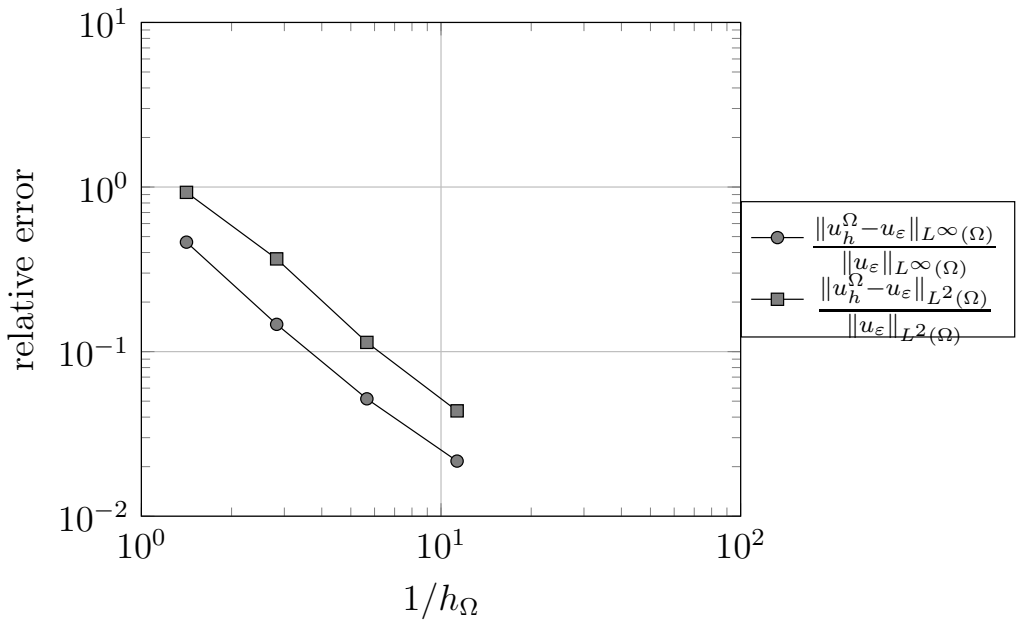

For both error norms we observe a convergence order of $h_\Omega^{3/2}$, which indicates that the effective problem with the chosen data is possibly more regular than predicted in Remark~\ref{r:regularity hom sol}.

\section{Collection of Proofs}\label{Sec Coll of Pfs}

\subsection{Proofs for Section \ref{Section FE schemes for periodic HJB}}

\begin{proof}[Proof of Lemma \ref{Preliminary estimates}]
For the first part, we use successively properties for the supremum, the Cauchy--Schwarz inequality, simple calculation and the Cordes condition \eqref{Cordes periodic} to obtain
\begin{align*}
&\left\lvert F_{\gamma}[(w,u)]-F_{\gamma}[(w',u')]-L_{\lambda}(w-w',u-u')\right\rvert^2 \\ &\leq \sup_{\alpha\in\Lambda} \left\lvert \gamma^{\alpha}\left(-A^{\alpha}:D(w-w')-b^{\alpha}\cdot \nabla (u-u')+c^{\alpha}(u-u')\right)+\nabla\cdot (w-w')-\lambda (u-u')\right\rvert^2\\ &\leq \sup_{\alpha\in\Lambda} \left( \left\lvert -\gamma^{\alpha}A^{\alpha}+I\right\rvert^2+\frac{\lvert \gamma^{\alpha}b^{\alpha}\rvert^2}{2\lambda}+\frac{\lvert \gamma^{\alpha}c^{\alpha}-\lambda\rvert^2 }{\lambda^2}\right)\left(\lvert D(w-w')\rvert^2 + 2\lambda\left\lvert \nabla (u-u')\right\rvert^2 +\lambda^2\lvert u-u'\rvert^2   \right)\\ &= \sup_{\alpha\in\Lambda} \left( n+1-\frac{\left(\mathrm{tr}(A^{\alpha})+\frac{c^{\alpha}}{\lambda}\right)^2}{\lvert A^{\alpha}\rvert^2+\frac{\lvert b^{\alpha}\rvert^2}{2\lambda}+\frac{\lvert c^{\alpha}\rvert^2}{\lambda^2}}\right)\left(\lvert D(w-w')\rvert^2 + 2\lambda\left\lvert \nabla (u-u')\right\rvert^2 +\lambda^2\lvert u-u'\rvert^2   \right)\\
&\leq (1-\delta)\left(\lvert D(w-w')\rvert^2 + 2\lambda\left\lvert \nabla (u-u')\right\rvert^2 +\lambda^2\lvert u-u'\rvert^2   \right)
\end{align*}
almost everywhere in $Y$, which yields the estimate \eqref{key Cordes implication}.

For the second part, we use \eqref{Maxwell-type}, integration by parts and Young's inequality to find
\begin{align*}
\vertiii{(w,u)}_{\lambda}^2 &= \|\mathrm{rot}(w)\|_{L^2(Y)}^2+\|\nabla\cdot w\|_{L^2(Y)}^2+2\lambda \|\nabla u\|_{L^2(Y)}^2+\lambda^2 \|u\|_{L^2(Y)}^2 \\
&= \|\mathrm{rot}(w)\|_{L^2(Y)}^2 + \|-\nabla\cdot w+\lambda u\|_{L^2(Y)}^2 + 2\lambda \int_Y \left(\nabla u - w\right)\cdot \nabla u\\
&\leq \|\mathrm{rot}(w)\|_{L^2(Y)}^2 + \|L_{\lambda}(w,u)\|_{L^2(Y)}^2 + \frac{\lambda}{\rho}\|\nabla u - w\|_{L^2(Y)}^2+ \lambda\rho \|\nabla u\|_{L^2(Y)}^2\\
&\leq \|\mathrm{rot}(w)\|_{L^2(Y)}^2 + \|L_{\lambda}(w,u)\|_{L^2(Y)}^2 + \frac{\lambda}{\rho}\|\nabla u - w\|_{L^2(Y)}^2+\frac{\rho }{2}\vertiii{(w,u)}_{\lambda}^2,
\end{align*}
which yields the Miranda--Talenti-type estimate \eqref{MT}.
\end{proof}

\begin{proof}[Proof of Lemma \ref{Mon and Lip}]
We are going to prove the claimed results (i), (ii), (iii) separately.

(i) By \eqref{key Cordes implication}, Young's inequality and the Miranda--Talenti-type estimate \eqref{MT} with the choice $\rho=2-2\sqrt{1-\delta}$, we find that
\begin{align*}
&a\left((w,u),(w-w',u-u')  \right)-a\left((w',u'),(w-w',u-u')  \right) \\ &\hspace{3cm} - \sigma_1 \|\mathrm{rot}(w-w')\|_{L^2(Y)}^2 -\sigma_2 \|\nabla(u-u')-(w-w')\|_{L^2(Y)}^2\\
&=\int_Y \left( F_{\gamma}[(w,u)]-F_{\gamma}[(w',u')]\right)L_{\lambda} (w-w',u-u')\\  &\geq \|L_{\lambda} (w-w',u-u')\|_{L^2(Y)}^2-\sqrt{1-\delta}\,\vertiii{(w-w',u-u')}_{\lambda}\|L_{\lambda} (w-w',u-u')\|_{L^2(Y)}\\
&\geq \frac{2-\sqrt{1-\delta}}{2}\|L_{\lambda} (w-w',u-u')\|_{L^2(Y)}^2-\frac{\sqrt{1-\delta}}{2}\vertiii{(w-w',u-u')}_{\lambda}^2 \\ &\geq \frac{1-\sqrt{1-\delta}}{2}\|L_{\lambda} (w-w',u-u')\|_{L^2(Y)}^2 - \frac{1}{2}\|\mathrm{rot}(w-w')\|_{L^2(Y)}^2 \\ &\hspace{3cm}-\frac{\lambda}{4-4\sqrt{1-\delta}}\|\nabla(u-u')-(w-w')\|_{L^2(Y)}^2.
\end{align*}
Therefore, by the definition of the constants $\sigma_1,\sigma_2$ and the Miranda--Talenti-type estimate \eqref{MT} with the choice $\rho=1$, we conclude that
\begin{align*}
a&\left((w,u),(w-w',u-u')  \right)-a\left((w',u'),(w-w',u-u')  \right) \\
&\geq \frac{1-\sqrt{1-\delta}}{2}\left(\|L_{\lambda} (w-w',u-u')\|_{L^2(Y)}^2+\|\mathrm{rot}(w-w')\|_{L^2(Y)}^2+\lambda \|\nabla(u-u')-(w-w')\|_{L^2(Y)}^2\right)\\ &\geq \frac{1-\sqrt{1-\delta}}{4}\vertiii{(w-w',u-u')}_{\lambda}^2,
\end{align*}
which is the claimed inequality.

(ii) We note that we have
\begin{align}\label{L_lambda est}
\|L_{\lambda}(w,u)\|_{L^2(Y)}\leq \sqrt{2} \,\vertiii{(w,u)}_{\lambda}\qquad \forall\, (w,u)\in X,
\end{align}
as there holds $\|\nabla\cdot w\|_{L^2(Y)}\leq \|Dw\|_{L^2(Y)}$ for any $w\in W_{\mathrm{per}}(Y;\R^n)$ by \eqref{Maxwell-type}. We bound the terms arising in the quantity on the left-hand side of \eqref{Lips} separately. For the term involving the nonlinearity, using \eqref{key Cordes implication}, we have
\begin{align*}
&\left\lvert \int_Y \left( F_{\gamma}[(w,u)]-F_{\gamma}[(w',u')]\right)L_{\lambda} (z,v)\right\rvert \\&\leq \|L_{\lambda} (z,v) \|_{L^2(Y)}\left( \|L_{\lambda} (w-w',u-u')\|_{L^2(Y)}+\sqrt{1-\delta}\,\vertiii{(w-w',u-u')}_{\lambda}\right)\\ &\leq \sqrt{2}\left(\sqrt{2}+\sqrt{1-\delta}\right)\vertiii{(w-w',u-u')}_{\lambda}\vertiii{(z,v)}_{\lambda}.
\end{align*}
For the term multiplying the constant $\sigma_1$, we have
\begin{align*}
\left\lvert \sigma_1\int_Y \mathrm{rot}(w-w')\cdot \mathrm{rot}(z)\right\rvert \leq \sigma_1 \|D(w-w')\|_{L^2(Y)}\|Dz\|_{L^2(Y)}\leq \sigma_1 \vertiii{(w-w',u-u')}_{\lambda}\vertiii{(z,v)}_{\lambda} ,
\end{align*}
as there holds $\|\mathrm{rot}(w)\|_{L^2(Y)}\leq \|Dw\|_{L^2(Y)}$ for any $w\in W_{\mathrm{per}}(Y;\R^n)$ by \eqref{Maxwell-type}. For the term multiplying the constant $\sigma_2$, we have by the triangle, Poincar\'e \eqref{Poincare explicit} and Cauchy--Schwarz inequalities that
\begin{align*}
&\left\lvert \sigma_2\int_Y \left(\nabla(u-u')-(w-w')\right)\cdot (\nabla v-z)\right\rvert \\ &\leq \sigma_2 \left( \|\nabla(u-u')\|_{L^2(Y)}+\frac{\sqrt{n}}{\pi}\|D(w-w')\|_{L^2(Y)}\right)\left(\|\nabla v\|_{L^2(Y)}+\frac{\sqrt{n}}{\pi}\|Dz\|_{L^2(Y)}\right)\\
&\leq \sigma_2 \left(\frac{1}{2\lambda}+\frac{n}{\pi^2}\right)\vertiii{(w-w',u-u')}_{\lambda}\vertiii{(z,v)}_{\lambda}.
\end{align*}
Altogether, we obtain the claimed inequality \eqref{Lips} with the constant
\begin{align*}
C_L=2+\sqrt{2}\sqrt{1-\delta}+\sigma_1 +\sigma_2\left(\frac{1}{2\lambda}+\frac{n}{\pi^2} \right),
\end{align*}
which is identical to the one given in Lemma \ref{Mon and Lip} (ii) using that $\sigma_2=\lambda\,\tilde{\sigma}_2$.

(iii) For any $m'\in M\backslash \{0\}$ we have $(0,m')\in X$ and hence
\begin{align*}
\sup_{(w',u')\in X\backslash\{0\}} \frac{b(m',(w',u'))}{\vertiii{(w',u')}_{\lambda}}\geq \frac{b(m',(0,m'))}{\vertiii{(0,m')}_{\lambda}}=\frac{\|\nabla m'\|_{L^2(Y)}^2}{\sqrt{2\lambda \|\nabla m'\|_{L^2(Y)}^2+\lambda^2 \|m'\|_{L^2(Y)}^2}}
\geq \frac{\|\nabla m'\|_{L^2(Y)}}{\sqrt{2\lambda+\frac{n}{\pi^2}\lambda^2}}
\end{align*}
by Poincar\'{e}'s inequality \eqref{Poincare explicit scalar} (recall that $M\subset W_{\mathrm{per}}(Y)$), which yields the claimed result \eqref{infsup}.
\end{proof}

\begin{proof}[Proof of Theorem \ref{wp of mixed}]
The existence of a unique solution $(m,(w,u))\in M\times X$ to \eqref{Mixed formulation} follows from the Brezzi-splitting; see \cite{BBF13} and \cite[Proposition 2.5]{GS19}, as we have the monotonicity and Lipschitz continuity for $a$ and an inf-sup condition from Lemma \ref{Mon and Lip}. For the second part of the claim, i.e., that $m=0$, $u\in H^2_{\mathrm{per}}(Y)$ with $w=\nabla u$ and $u$ is the solution to \eqref{HJB equ}, we note that $L_{\lambda}$ is surjective from the set $X_g:=\{(w',u')\in X:w'=\nabla u'\}$ onto $L^2(Y)$. We first test the mixed formulation \eqref{Mixed formulation} with pairs $(w',u')$ from $X_g$ to obtain $F_{\gamma}[(w,u)]=0$ almost everywhere and then with the solution pair $(w,u)$ to find that $w=\nabla u$ and thus $u\in H^2_{\mathrm{per}}(Y)$. We conclude the proof by noting that this implies that $u$ is the solution to \eqref{HJB modified} (and hence to \eqref{HJB equ} by Theorem \ref{thm well-pos}) and that $m=0$.
\end{proof}

\begin{proof}[Proof of Lemma \ref{lmm bddness and disinfsup}]
We use the triangle, Poincar\'e \eqref{Poincare explicit} and Cauchy--Schwarz inequalities to obtain that
\begin{align*}
b(m',(w',u'))\leq \|\nabla m'\|_{L^2(Y)}\left(\|\nabla u'\|_{L^2(Y)}+\frac{\sqrt{n}}{\pi}\|Dw'\|_{L^2(Y)}\right)\leq \sqrt{\frac{1}{2\lambda}+\frac{n}{\pi^2}}\, \|\nabla m'\|_{L^2(Y)}\vertiii{(w',u')}_{\lambda}
\end{align*}
for all $(m',(w',u'))\in M\times X$.

The discrete inf-sup condition holds, as for $m'_h\in M_h\backslash \{0\}$ we have $(0,m'_h)\in X_h\backslash\{0\}$ since $M_h\subset U_h\cap M$, and hence
\begin{align*}
\sup_{(w'_h,u'_h)\in X_h\backslash\{0\}} \frac{b(m'_h,(w'_h,u'_h))}{\vertiii{(w'_h,u'_h)}_{\lambda}}\geq \frac{b(m'_h,(0,m'_h))}{\vertiii{(0,m'_h)}_{\lambda}}=\frac{\|\nabla m'_h\|_{L^2(Y)}^2}{\sqrt{2\lambda \|\nabla m'_h\|_{L^2(Y)}^2+\lambda^2 \|m'_h\|_{L^2(Y)}^2}}
\geq \frac{\|\nabla m'_h\|_{L^2(Y)}}{\sqrt{2\lambda+\frac{n}{\pi^2}\lambda^2}}
\end{align*}
by Poincar\'{e}'s inequality \eqref{Poincare explicit scalar} (recall that $M\subset W_{\mathrm{per}}(Y)$), which yields the claimed result \eqref{dis infsup}.
\end{proof}

\begin{proof}[Proof of Theorem \ref{Thm error bd}]
We only show the error bound, as the existence and uniqueness of solutions for \eqref{Discrete mixed formulation} follows from Lemma \ref{Mon and Lip} and Lemma \ref{lmm bddness and disinfsup} in a standard way; see \cite[Proposition 3.1]{GS19}.

\textit{Step 1}: We introduce the discrete kernel
\begin{align*}
Z_h:=\left\{ (w_h',u_h')\in X_h: b\left(m_h',(w_h',u_h') \right)=0\quad\forall\, m_h'\in M_h\right\}
\end{align*}
and claim that there holds
\begin{align}\label{Goal of Step 1}
\vertiii{(w-w_h,u-u_h)}_{\lambda}\leq  \frac{C_L}{C_M} \inf_{(w'_h,u'_h)\in Z_h}\vertiii{(w-w'_h,u-u'_h)}_{\lambda}.
\end{align}
Indeed, we use successively the monotonicity from Lemma \ref{Mon and Lip} (i), the solution property of $(w,u)$ from Theorem \ref{wp of mixed} and the fact that $(w_h,u_h)$ solves the discrete problem \eqref{Discrete mixed formulation}, and the Lipschitz estimate from  Lemma \ref{Mon and Lip} (ii) to find that
\begin{align*}
C_M\vertiii{(w-w_h,u-u_h)}_{\lambda}^2&\leq a\left((w,u),(w-w_h,u-u_h)  \right)-a\left((w_h,u_h),(w-w_h,u-u_h)  \right)\\
&=-a\left((w_h,u_h),(w,u)\right)\\
&=-a\left((w_h,u_h),(w-w_h',u-u_h')  \right)\\
&=a\left((w,u),(w-w_h',u-u_h')\right)-a\left((w_h,u_h),(w-w_h',u-u_h')  \right)\\
&\leq C_L\vertiii{(w-w_h,u-u_h)}_{\lambda}\vertiii{(w-w_h',u-u_h')}_{\lambda}
\end{align*}
for any $(w'_h,u'_h)\in Z_h$, which implies the desired estimate \eqref{Goal of Step 1}.

\textit{Step 2}: We let $(w_{*},u_{*})\in X_h$ denote the best-approximation to $(w,u)$ from $Z_h$, i.e.,
\begin{align}\label{defn of wstar}
\vertiii{(w-w_{*},u-u_{*})}_{\lambda}= \inf_{(w'_h,u'_h)\in Z_h}\vertiii{(w-w'_h,u-u'_h)}_{\lambda},
\end{align}
and we derive a linear mixed problem for $(w_{*},u_{*})$.

By the discrete inf-sup condition \eqref{dis infsup}, there exists $m_{*}\in M_h$ such that
\begin{align*}
\left\{ \begin{aligned} \left\langle(w_{*},u_{*}),(w'_h,u'_h) \right\rangle_{\lambda}+ b(m_{*},(w'_h,u'_h)) &= \left\langle(w,u),(w'_h,u'_h) \right\rangle_{\lambda} & & \forall \,(w'_h,u'_h)\in X_h,\\ b(m'_h,(w_{*},u_{*}))&=0 & &\forall\, m'_h\in M_h,\end{aligned}\right.
\end{align*}
where $\langle\cdot,\cdot\rangle_{\lambda}:X\times X\rightarrow \R$ is the inner product given by
\begin{align*}
\langle (w',u'), (w'',u'')\rangle_{\lambda}:= \int_Y Dw':Dw'' + 2\lambda\int_Y \nabla u'\cdot \nabla u'' + \lambda^2\int_Y u'u''.
\end{align*}
We also note that the solution pair $(m,(w,u))$ satisfies the similar system (recall that $m=0$)
\begin{align*}
\left\{ \begin{aligned} \left\langle(w,u),(w',u') \right\rangle_{\lambda}+ b(m,(w',u')) &= \left\langle(w,u),(w',u') \right\rangle_{\lambda} & & \forall \,(w',u')\in X,\\ b(m',(w,u))&=0 & &\forall\, m'\in M.\end{aligned}\right.
\end{align*}

\textit{Step 3}: We derive an error bound for $(w-w_{*},u-u_{*})$ in the $\vertiii{\cdot}_{\lambda}$ norm using classical linear mixed finite element theory.

Note that for any $(w',u'),(w'',u'')\in X$, we have
\begin{align*}
\left\lvert\langle (w',u'), (w'',u'')\rangle_{\lambda}\right\rvert\leq \vertiii{(w',u')}_{\lambda} \vertiii{(w'',u'')}_{\lambda},\qquad \langle (w',u'), (w',u')\rangle_{\lambda} = \vertiii{(w',u')}_{\lambda}^2.
\end{align*}
In particular, we have boundedness and coercivity on the whole space, i.e.,
\begin{align*}
\left\lvert\langle (w',u'), (w'',u'')\rangle_{\lambda}\right\rvert\leq C_a \vertiii{(w',u')}_{\lambda} \vertiii{(w'',u'')}_{\lambda},\qquad \langle (w',u'), (w',u')\rangle_{\lambda} \geq c_a \vertiii{(w',u')}_{\lambda}^2
\end{align*}
for all $(w',u'),(w'',u'')\in X$ with the constants $C_a:=c_a:=1$. Further, from Lemma \ref{lmm bddness and disinfsup} we have the discrete inf-sup condition \eqref{dis infsup} with the constant $c_b$ and boundedness of $b$ with the constant $C_b$. Then, by linear mixed finite element theory (see \cite{Sul13}), we obtain
\begin{align}\label{Result of Step 3}
\begin{split}
&\vertiii{(w-w_{*},u-u_{*})}_{\lambda} \\&\leq \left(1+\frac{C_a}{c_a}\right)\left(1+\frac{C_b}{c_b}\right)\inf_{(w'_h,u'_h)\in X_h}\vertiii{(w-w_h',u-u_h')}_{\lambda}+\frac{C_b}{c_a}\inf_{m'_h\in M_h} \|\nabla(m-m_h')\|_{L^2(Y)}\\
&=2\left(1+\frac{C_b}{c_b}\right)\inf_{(w'_h,u'_h)\in X_h}\vertiii{(w-w_h',u-u_h')}_{\lambda},
\end{split}
\end{align}
where we have used that $m=0$ and $C_a=c_a=1$ in the last line.

\textit{Step 4}: We conclude by combining \eqref{Goal of Step 1}, \eqref{defn of wstar} and \eqref{Result of Step 3} to deduce that
\begin{align*}
\vertiii{(w-w_h,u-u_h)}_{\lambda}\leq \frac{C_L}{C_M} \vertiii{(w-w_{*},u-u_{*})}_{\lambda} \leq
2\frac{C_L}{C_M}\left(1+\frac{C_b}{c_b}\right)\inf_{(w'_h,u'_h)\in X_h}\vertiii{(w-w_h',u-u_h')}_{\lambda},
\end{align*}
which is the desired error bound.
\end{proof}

\begin{proof}[Proof of Theorem \ref{thrm a posteriori}]
We use successively the monotonicity from Lemma \ref{Mon and Lip} (i), the solution property of $(w,u)$ from Theorem \ref{wp of mixed}, the Cauchy--Schwarz inequality (note that $w=\nabla u$), the bound \eqref{L_lambda est}, and Young's inequality to show that
\begin{align*}
&C_M\vertiii{(w-w_h,u-u_h)}_{\lambda}^2 \\&\leq a\left((w,u),(w-w_h,u-u_h)\right)-a\left((w_h,u_h),(w-w_h,u-u_h)\right) \\
&= -a\left((w_h,u_h),(w-w_h,u-u_h)\right)\\
&\leq \left\|F_{\gamma}[(w_h,u_h)]\right\|_{L^2(Y)}\left\|L_{\lambda}(w-w_h,u-u_h) \right\|_{L^2(Y)}+\sigma_1 \left\|\mathrm{rot}(w_h)\right\|_{L^2(Y)}^2+\sigma_2 \left\|w_h-\nabla u_h\right\|_{L^2(Y)}^2\\
&\leq \sqrt{2}  \left\|F_{\gamma}[(w_h,u_h)]\right\|_{L^2(Y)}\vertiii{(w-w_h,u-u_h)}_{\lambda}+\sigma_1 \left\|\mathrm{rot}(w_h)\right\|_{L^2(Y)}^2+\sigma_2 \left\|w_h-\nabla u_h\right\|_{L^2(Y)}^2\\
&\leq C_M^{-1}\left\|F_{\gamma}[(w_h,u_h)]\right\|_{L^2(Y)}^2+\frac{C_M}{2}\vertiii{(w-w_h,u-u_h)}_{\lambda}^2+\sigma_1 \left\|\mathrm{rot}(w_h)\right\|_{L^2(Y)}^2+\sigma_2 \left\|w_h-\nabla u_h\right\|_{L^2(Y)}^2.
\end{align*}
Upon rearranging, we find the claimed \textit{a posteriori} estimate.

For the efficiency estimate, recall the solution property of $(w,u)$ from Theorem \ref{wp of mixed}, in particular $w=\nabla u$ and $F_{\gamma}[(w,u)]=0$ almost everywhere.
With the Lipschitz property from Lemma \ref{Mon and Lip} (ii) and with Lemma \ref{Preliminary estimates}, we then obtain
\begin{align*}
&C_L \vertiii{(w-w_h,u-u_h)}_{\lambda}^2 - \sigma_1 \left\|\mathrm{rot}(w_h)\right\|_{L^2(Y)}^2-\sigma_2 \left\|w_h-\nabla u_h\right\|_{L^2(Y)}^2 \\ &\geq a\left((w,u),(w-w_h,u-u_h)\right)-a\left((w_h,u_h),(w-w_h,u-u_h)\right)- \sigma_1 \left\|\mathrm{rot}(w_h)\right\|_{L^2}^2-\sigma_2 \left\|w_h-\nabla u_h\right\|_{L^2}^2\\
&=-a\left((w_h,u_h),(w-w_h,u-u_h)\right)- \sigma_1 \left\|\mathrm{rot}(w_h)\right\|_{L^2(Y)}^2-\sigma_2 \left\|w_h-\nabla u_h\right\|_{L^2(Y)}^2\\
&=\left\|F_{\gamma}[(w_h,u_h)]\right\|_{L^2(Y)}^2
+ \int_Y F_{\gamma}[(w_h,u_h)]\left( F_{\gamma}[(w,u)]-F_{\gamma}[(w_h,u_h)]-L_{\lambda}(w-w_h,u-u_h)\right)\\
&\geq \left\|F_{\gamma}[(w_h,u_h)]\right\|_{L^2(Y)}^2-\sqrt{1-\delta} \left\|F_{\gamma}[(w_h,u_h)]\right\|_{L^2(Y)}\vertiii{(w-w_h,u-u_h)}_{\lambda}\\
&\geq \frac{1}{2}\left\|F_{\gamma}[(w_h,u_h)]\right\|_{L^2(Y)}^2-\frac{1-\delta}{2}\vertiii{(w-w_h,u-u_h)}_{\lambda}^2,
\end{align*}
which yields the efficiency estimate upon rearranging.
\end{proof}

\subsection{Proofs for Section \ref{Section NumHom}}

\begin{proof}[Proof of Theorem \ref{thrm error bd mixed}]
Using the definition of the $\vertiii{\cdot}_{\lambda_{\sigma}}$ norm and interpolation inequalities, denoting the interpolation operators on the finite element spaces by $\mathcal{I}^{\calS^q}_h,\mathcal{I}^{\calS^l}_h$, we find that
\begin{align*}
\inf_{(w'_h,u'_h)\in X_h}&\vertiii{(\nabla v^{\sigma}-w'_h, v^{\sigma}-u'_h)}_{\lambda_{\sigma}}\leq \vertiii{\left(\nabla v^{\sigma}-\left(\mathcal{I}^{\calS^q}_h(\nabla v^{\sigma})-\int_Y \mathcal{I}^{\calS^q}_h(\nabla v^{\sigma})\right) , v^{\sigma}-\mathcal{I}^{\calS^l}_h(v^{\sigma})\right)}_{\lambda_{\sigma}}
\\&=\left(\left\| D\left(\nabla v^{\sigma}-\mathcal{I}^{\calS^q}_h(\nabla v^{\sigma}) \right)\right\|_{L^2(Y)}^2+ 2\lambda_{\sigma} \lvert v^{\sigma}-\mathcal{I}^{\calS^l}_h(v^{\sigma})\rvert_{H^1(Y)}^2+\lambda_{\sigma}^2 \|v^{\sigma}-\mathcal{I}^{\calS^l}_h(v^{\sigma})\|_{L^2(Y)}^2 \right)^{\frac{1}{2}}\\&\leq C_i \left(h^{2\min\{r,q\}}  +2\lambda_{\sigma}h^{2\min\{1+r,l\}} +  \lambda_{\sigma}^2 h^{2\min\{2+r,l\}}   \right)^{\frac{1}{2}}\|\nabla v^{\sigma}\|_{H^{1+r}(Y)}\\ &\leq C_i \left(1  +2\lambda_{\sigma} +  \lambda_{\sigma}^2 \right)^{\frac{1}{2}}h^{\min\{r,q,l\}} \|\nabla v^{\sigma}\|_{H^{1+r}(Y)}
\end{align*}
for $h>0$ sufficiently small, where $C_i>0$ is the constant arising in applying the interpolation inequalities. The claimed result now follows from \eqref{appr mix}, i.e.,
\begin{align*}
\vertiii{(\nabla v^{\sigma}-w_h^{\sigma},v^{\sigma}-v_h^{\sigma})}_{\lambda_{\sigma}}&\leq C_e(\delta,\lambda_{\sigma},n) \inf_{(w'_h,u'_h)\in X_h}\vertiii{(\nabla v^{\sigma}-w'_h, v^{\sigma}-u'_h)}_{\lambda_{\sigma}}\\ &\leq
C_e(\delta,\lambda_{\sigma},n)C_i\left(1  +\lambda_{\sigma} \right)h^{\min\{r,q,l\}} \|\nabla v^{\sigma}\|_{H^{1+r}(Y)}\\
&\leq C_e(\delta,\lambda,n)C_i\left(1  +\lambda\right)h^{\min\{r,q,l\}} \|\nabla v^{\sigma}\|_{H^{1+r}(Y)},
\end{align*}
where we used $\lambda_{\sigma}\leq \lambda$ and Remark \ref{rk explicit Ce}.
\end{proof}

\begin{proof}[Proof of Theorem \ref{App of eff H}]
We use H\"{o}lder and triangle inequalities, Lemma \ref{Lmm: prop of H} and the error bound \eqref{helperror} to obtain
\begin{align*}
\left\lvert\int_Y \left(-\sigma v^{\sigma}_{h}(\cdot\,;s,p,R)\right) - H(s,p,R)\right\rvert &= \left\lvert\int_Y \left(-\sigma v^{\sigma}_{h}(\cdot\,;s,p,R) - H(s,p,R)\right)\right\rvert\\
&\leq \|-\sigma v^{\sigma}_{h}(\cdot\,;s,p,R) - H(s,p,R)\|_{L^2(Y)}\\
&\lesssim \sigma \|v^{\sigma}_{h}(\cdot\,;s,p,R)-v^{\sigma}(\cdot\,;s,p,R)\|_{L^2(Y)}+ \sigma(1+\lvert p\rvert +\lvert R\rvert)\\
&\lesssim \left(h^{\min\{r,q,l\}} + \sigma\right)(1+\lvert p\rvert +\lvert R\rvert).
\end{align*}
The second part of the claim can be shown analogously.
\end{proof}

\section{Conclusion}

In this work we introduced a scheme for the numerical homogenization of the fully nonlinear second-order Hamilton--Jacobi--Bellman equation with Cordes coefficients, based on a novel mixed finite element method for the periodic corrector problems.

The focus of the first part of the paper was the construction and the rigorous analysis of mixed finite element approximations to the periodic solution of the HJB equation. We derived a mixed formulation for the problem and proved well-posedness as well as \textit{a priori} and \textit{a posteriori} error bounds. Explicit formulas for the error constants were provided, showing the asymptotic behavior of the constants in the Cordes parameters.

In the second part of the paper we focused on the numerical homogenization of HJB equations with locally periodic coefficients. Theoretical homogenization results were provided and used in the analysis of the numerical homogenization scheme. We presented and rigorously analyzed a method for the approximation of the effective Hamiltonian based on mixed finite element approximations of the periodic cell problem for the approximate corrector from the first part.

Finally, we presented numerical experiments illustrating the theoretical results. The experiments demonstrated the approximation of the effective Hamiltonian at a point as well as the approximation of the solution to the homogenized problem. 

Future work will focus on the numerical homogenization of other fully nonlinear partial differential equations such as the Isaacs equation. The strong $H^2$ solution of Isaacs equations with Cordes coefficients has recently been discussed in \cite{KS20} and can be used as a framework to study its numerical homogenization.

\addtocontents{toc}{\protect\setcounter{tocdepth}{0}}

\section*{Acknowledgements}

This work was supported by the UK Engineering and Physical Sciences Research Council [EP/L015811/1].
The second author gratefully acknowledges helpful conversations with Professor Yves Capdeboscq (Universit\'{e} de Paris) during the preparation of this work.

\bibliographystyle{plain}
\bibliography{ref}

\end{document}